\newtheorem{theorem}{Theorem}[section]
\newtheorem{corollary}[theorem]{Corollary}
\newtheorem{lemma}[theorem]{Lemma}
\theoremstyle{definition}
\newtheorem{definition}[theorem]{Definition}
\newtheorem{problem}{Problem}
\numberwithin{equation}{section}
\begin{document}
	\title{On the independence of shifts defined on $\mathbb{N}^d$ and trees}
	
	\author[Jung-Chao Ban]{Jung-Chao Ban}
	\address[Jung-Chao Ban]{Department of Mathematical Sciences, National Chengchi University, Taipei 11605, Taiwan, ROC.}
	\address{Math. Division, National Center for Theoretical Science, National Taiwan University, Taipei 10617, Taiwan. ROC.}
	\email{jcban@nccu.edu.tw}
	
	\author[Guan-Yu Lai]{Guan-Yu Lai}
	\address[Guan-Yu Lai]{Department of Mathematical Sciences, National Chengchi University, Taipei 11605, Taiwan, ROC.}
	\email{gylai@nccu.edu.tw}

	\keywords{Multidimensional dynamical systems, symbolic dynamical systems, topological entropy, independence}
	
	
	
	
	\begin{abstract}
		In this paper, we study the independence of shifts defined on $\mathbb{N}^d$ ($\mathbb{N}^d$ shift) and trees (tree-shift). Firstly, for the completeness of the article, we provide a proof that an $\mathbb{N}^d$ shift has positive (topological) entropy if and only if it has an independence set with positive upper density. Secondly, we obtain that when the base shift $X$ is a hereditary shift, then the associated tree-shift $\mathcal{T}_X$ on an unexpandable tree has positive entropy if and only if it has an independence set with positive density. However, the independence of the tree-shift on an expandable tree differs from that of $\mathbb{N}^d$ shifts or tree-shifts on unexpandable trees. The boundary independence property is introduced and we prove that it is equivalent to the positive entropy of a tree-shift on an expandable tree.   
	\end{abstract}
	\maketitle

\section{Introduction}

Let $\mathcal{A}$ be a finite set with $\left\vert\mathcal{A}\right\vert\geq 2$, where $|\cdot|$ is the cardinality of a set. Let $X\subseteq \mathcal{A}^{\mathbb{N}}$ be an $\mathbb{N}$ shift, and
the shift $\sigma :\mathcal{A}^{\mathbb{N}}\rightarrow \mathcal{A}^{\mathbb{N}}$ is a continuous transformation given by $\sigma(x)=(x_{i+1})_{i=1}^{\infty }$ if $x=(x_{i})_{i=1}^{\infty }\in \mathcal{A}^{\mathbb{N}}$. The symbolic system $(X,\sigma)$ forms a dynamical system and the \emph{entropy}, say $h(X)$ \cite{LM-1995}, of $\left( X,\sigma \right)$ is defined as 
\begin{equation}
h(X)=\lim_{n\rightarrow \infty }\frac{\log \left\vert B_{n}(X)\right\vert }{n}\text{,}  \label{6}
\end{equation}
where $B_{n}(X)$ is the set of all $n$-blocks of $X$. The limit (\ref{6}) exists by using the subadditive argument \cite{LM-1995}. It is evident that the entropy of the dynamical system $(X,\sigma )$ is a significant invariant and receives a great deal of attention. It provides the information about
the complexity, chaos phenomena, and classification theory of the system $(X,\sigma )$. For a more detailed discussion of the research topics mentioned above, we refer the reader to \cite{alseda2000combinatorial, downarowicz2011entropy,kitchens2012symbolic, LM-1995,viana2016foundations}.

\subsection{Independence set for $\mathbb{N}^{d}$ shifts}
In recent studies, the `independence set' concept has been employed to characterize the positive entropy of a symbolic dynamical system $(X,\sigma)$ as an indicator of the system's randomness (cf. \cite{falniowski2015two,glasner1995quasi,huang2006local, kerr2007independence, weiss2000single}). A set $J\subseteq 
\mathbb{N}$ is called an \emph{independence set} for an $\mathbb{N}$ shift $X$ if for every function $\eta :J\rightarrow \mathcal{A}$ there is a point $x=\{x_{j}\}_{j=1}^{\infty }\in X$ such that $x_{i}=\eta (i)$ for every $i\in J$. A set $A\subseteq \mathbb{N}$ has \emph{asymptotic density} $\alpha $ if
the limit 
\[
d(A)=\lim_{n\rightarrow \infty }\frac{\left\vert A\cap [1,n]\right\vert }{n} 
\]
exists and is equal to $\alpha $, where $[m,n]:=\{m,...,n\}$ ($\forall m\leq n$). For the relation of topological entropy with the $\mathbb{N}$ shift, B. Weiss \cite{weiss2000single} prove the following result.

\begin{theorem}[Theorem 8.1\cite{weiss2000single}]
\label{Thm: 2}If $X\subseteq [0,1]^{\mathbb{N}}$ is a binary $\mathbb{N}$ shift, then $h(X)$ is positive if and only if $X$ is independent over a set whose asymptotic density exists, and is positive. Furthermore, if $X\subseteq [0,r-1]^{\mathbb{N}}$, then $h(X)>\log (r-1)$ if and only if $X$ is independent over a set $A$ whose asymptotic density exists, and is positive.
\end{theorem}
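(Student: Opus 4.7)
My plan is to prove the two implications of the biconditional separately. The reverse implication (a positive-density independence set forces the entropy lower bound) is a direct counting argument, while the forward implication is the substantive content and combines a Sauer--Shelah-type combinatorial lemma with a compactness argument to pass from finite to infinite.

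For the reverse direction, suppose $J \subseteq \mathbb{N}$ is an independence set with $d(J) = \alpha > 0$. By definition every function $\eta : J \to \alphabet$ is realized by some point of $X$, so the restriction map $B_n(X) \to \alphabet^{J \cap [1,n]}$ is surjective, giving
\[
|B_n(X)| \ge |\alphabet|^{|J \cap [1,n]|}.
\]
Dividing by $n$ and sending $n \to \infty$ yields $h(X) \ge \alpha \log |\alphabet|$. In the binary case this is already positive; in the $r$-ary case the threshold $\log(r-1)$ forces $\alpha \log r > \log(r-1)$, so this half of the statement is inherently quantitative and must be coordinated with the constants arising in the combinatorial lemma below.

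For the forward direction in the binary case, assume $h(X) > 0$, so there exists $\epsilon > 0$ with $|B_n(X)| \ge 2^{\epsilon n}$ for all sufficiently large $n$. The Sauer--Shelah lemma asserts that any $\mathcal{F} \subseteq \{0,1\}^{[1,n]}$ with $|\mathcal{F}| > \sum_{i<k}\binom{n}{i}$ shatters some subset of $[1,n]$ of size at least $k$. Using the binomial-entropy estimate $\sum_{i<\delta n}\binom{n}{i} \le 2^{nH(\delta)}$ and choosing $\delta > 0$ with $H(\delta) < \epsilon$, I obtain for every large $n$ a subset $J_n \subseteq [1,n]$ with $|J_n| \ge \delta n$ that is shattered by $B_n(X)$. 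For the $r$-ary case I would replace Sauer--Shelah with the Karpovsky--Milman--Alon generalization for shattering over an arbitrary alphabet, which yields the analogous conclusion once $|B_n(X)| > (r-1+\epsilon)^n$.

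The final step is to promote the sequence $(J_n)$ of finite shattered sets into a single infinite independence set $J \subseteq \mathbb{N}$ whose asymptotic density exists and is positive. A diagonal argument passes to a subsequence $(n_k)$ along which the indicators $\mathbbm{1}_{J_{n_k}}$ converge pointwise on $\mathbb{N}$ to some $\mathbbm{1}_J$; for any finite $F \subseteq J$ one has $F \subseteq J_{n_k}$ eventually, and combining the finite shattering with compactness of $X$ (and a second diagonal to exhaust $J$) furnishes a point of $X$ realizing any prescribed $\eta : J \to \alphabet$. The step I expect to require the most care is the density control: the naive diagonal limit only guarantees positive upper or Banach density for $J$, since the sets $J_{n_k}$ may concentrate near their right endpoints. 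I would handle this by a shift/pigeonhole argument that exploits shift-invariance of the language of $X$ to transfer shattering to initial segments with at least $\delta L$ elements for arbitrary $L$, and then refine the diagonal so that the counting function of $J$ is forced to grow linearly at a prescribed rate, producing a shattered set whose asymptotic density actually exists and is positive.
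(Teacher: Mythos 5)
The paper itself offers no proof of this statement---it is quoted from Weiss, and the supporting machinery (Lemma~\ref{lemma1}, adapted from Falniowski et al.) is deployed only for the $\mathbb{N}^d$ analogue, Theorem~\ref{Thm: 3}. Your skeleton (a counting bound for one implication; Sauer--Shelah plus the binomial--entropy estimate for the other) is the same skeleton that underlies Lemma~\ref{lemma1}(3)--(6), so the approach is the intended one, but it has two genuine gaps. The first is exactly the step you defer: upgrading the finite shattered sets $J_n\subseteq[1,n]$ with $|J_n|\ge\delta n$ to a single infinite independence set whose asymptotic density \emph{exists} and is positive. A pointwise diagonal limit of the indicators of $J_{n_k}$ can be the empty set (the $J_{n_k}$ may lie entirely inside $[n_k/2,n_k]$), and even when it is nonempty nothing forces $|J\cap[1,n]|/n$ to converge; the ``shift/pigeonhole argument'' you invoke is named but not carried out, and it is precisely the missing content. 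The mechanism in the source the paper follows (and in the two-dimensional Lemma~\ref{lemma1}(1), (2), (4)) is structural: the characteristic functions of independence sets of $X$ form a factorial, prolongable language, hence a subshift $I_X$; the shattering bound shows $\mathrm{Fr}_1(I_X)>0$; the frequency lemma then produces one point of $I_X$ \emph{all of whose prefixes} already carry frequency at least $\mathrm{Fr}_1(I_X)$, while Fekete's lemma caps the upper density of any independence set by the same constant, so the limit exists and equals $\mathrm{Fr}_1(I_X)$. Some version of this (or Weiss's inductive concatenation) must replace the bare diagonal argument.

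The second gap is the reverse implication of the ``furthermore'' clause. From an independence set of density $\alpha$ over $r$ symbols your count yields only $h(X)\ge\alpha\log r$, which is below $\log(r-1)$ when $\alpha$ is small, and no ``coordination of constants'' can repair this: the subshift of $[0,r-1]^{\mathbb{N}}$ consisting of all sequences vanishing off a single residue class modulo $N$ has an independence set of asymptotic density $1/N$ (one full residue class) yet entropy $\frac{1}{N}\log r$, which is strictly below $\log(r-1)$ for $r\ge 3$ and $N\ge 2$. So the counting argument cannot be completed from the hypothesis as quoted; before finishing this half you would need to work from the precise formulation of the $r$-ary statement in Weiss and in Falniowski et al., where the density or independence requirement in this direction is stronger than what appears here. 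The binary biconditional, by contrast, is unaffected, and your treatment of it is correct modulo the density issue above.
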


Later, Falniowski et al. prove the strengthened result that Theorem \ref{Thm: 2} is also true for Shnirelman density\footnote{The definition of Shnirelman density of $A\subseteq \mathbb{N}$ is replaced $\lim$ in the definition of $d(A)$ by $\inf$.} \cite[Theorem 1 \& Theorem 2]{falniowski2015two}. In accordance with above theorems, a positive entropy shift $X$ ensures a large number of nodes in $\mathbb{N}$ where $X$ behaves randomly. The following question arises naturally.

\begin{problem}
Dose Theorem \ref{Thm: 2} hold true for multidimensional shifts?
\end{problem}

Due to the many different types of multidimensional shifts, our focus will be on the following two types of shifts: (1). the $\mathbb{N}^{d}$ shifts with $d\geq 2$, and (2). the tree-shift, i.e., the shift defined on the free semigroup with $d$ generators (the conventional $d$-tree) or Markov-Cayley trees with $d$ generators. The shifts of type (1) and (2) exhibit distinct behavior because the corresponding underlined spaces, i.e., the $\mathbb{N}^{d}$ and free semigroup are quite different. We refer the reader to \cite{ceccherini2010cellular} for a detailed discussion of shifts defined on an amenable group (e.g., $\mathbb{N}^{d}$) and to \cite{kerr2016ergodic} for those defined on the sofic groups (e.g., free semigroup with $d$ generators). Our main results are as follows.

\textbf{1}. Theorem \ref{Thm: 2} is extended to $\mathbb{N}^{d}$ shifts (Theorem \ref{Thm: 3}). We emphasize that our result on an $\mathbb{N}^{d}$ shift (Theorem \ref{Thm: 3}) is comparable to Theorem \ref{Thm: 2} except the asymptotic density in Theorem \ref{Thm: 2} is replaced by the upper density in Theorem \ref{Thm: 3}. While this result has been previously obtained by D. Kerr and H. Li \cite[Section 12]{kerr2016ergodic} for amenable group actions, we provide a proof using extended techniques introduced in \cite{falniowski2015two} for the convenience of the reader.  

\textbf{2}. The concept of an independence set for tree-shifts is being studied. We separate the set of trees into two categories, namely, the unexpandable ($\gamma =1$) and expandable trees ($\gamma \geq 1$). According to Theorem \ref{Thm: 1} (2), the tree-shift on a $\gamma =1$ tree has the same result as $\mathbb{N}$ shift (Theorem \ref{Thm: 2}). However, as we discuss below, the independence set for the tree-shifts on a $\gamma >1$ tree is quite different from that of $\mathbb{N}^{d}$ shifts. Theorem \ref{Thm: 4} presents an example that reveals that the independence set concept is no longer valid for a tree-shift on $\gamma >1$ trees. A new concept of the independence set (boundary independence property) for those tree-shifts is provided, and
Theorem \ref{Thm: 5} shows that such a property characterizes the positive entropy of a tree-shift. The boundary independence property is also applied to characterize positive surface entropy of a tree-shift (Theorem \ref{Thm: 6}). Consequently, Corollary \ref{Cor 1} establishes the equivalence of positivity between entropy and surface entropy.

First, we present our results for the independence of the $\mathbb{N}^d$ shifts below. Let $2\leq d\in \mathbb{N}$, the upper density of a set of nodes in $\mathbb{N}^{d}$ is introduced, which is used to study the independence set for $\mathbb{N}^{d}$ shifts. Let $X\subseteq \mathcal{A}^{\mathbb{N}^{d}}$ be an $\mathbb{N}^{d}$
shift. A subset $S\subseteq \mathbb{N}^{d}$ is called an \emph{independence set} of $X$ if for any $u\in \mathcal{A}^{S}$, there exists an $x=(x_{\mathbf{i}})_{\mathbf{i}\in \mathbb{N}^{d}}\in X$ such that $x_{\mathbf{i}}=u_{\mathbf{i}}$ for all $\mathbf{i}\in S$. The \emph{upper density} of a set $S\subseteq \mathbb{N}^{d}$ is defined by 
\begin{equation}
\overline{d}(S)=\limsup\limits_{n_{1},\ldots ,n_{d}\rightarrow \infty }\frac{\left\vert S\cap \prod_{i=1}^{d}[1,n_{i}]\right\vert }{\left\vert
\prod_{i=1}^{d}[1,n_{i}]\right\vert }\text{.}  \label{4}
\end{equation}

According to Theorem \ref{Thm: 3} below, the positivity of $h(X)$ indicates that $X$ behaves randomly on many nodes (with positive upper density) of $\mathbb{N}^{d}$.

\begin{theorem}[$\mathbb{N}^{d}$ shifts]
\label{Thm: 3}Let $X$ be an $\mathbb{N}^{d}$ shift over $\mathcal{A}=[0,1]$. Then, $h(X)>0$ if and only if $X$ is independent over a set whose upper density is positive.
\end{theorem}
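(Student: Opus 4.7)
The theorem has two implications.

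For the easy direction: if $S\subseteq\mathbb{N}^d$ is independent with $\overline{d}(S)=\alpha>0$, pick $\vec{n}_k=(n_{k,1},\dots,n_{k,d})\to\infty$ componentwise realising the $\limsup$ in \eqref{4}. Since $\mathcal{A}=\{0,1\}$, every $\{0,1\}$-valued function on $S\cap\prod_i[1,n_{k,i}]$ extends by the independence of $S$ to some $x\in X$, and distinct extensions give distinct restrictions; hence $|B_{\vec{n}_k}(X)|\geq 2^{|S\cap\prod_i[1,n_{k,i}]|}$. Dividing $\log$ by $\prod_i n_{k,i}$ and letting $k\to\infty$ yields $h(X)\geq\alpha\log 2>0$.

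For the hard direction, suppose $h(X)>0$ and fix $\delta>0$ so small that the binary entropy $H(\delta)$ is strictly less than $h(X)/\log 2$. \emph{Step 1 (Sauer--Shelah).} For all large $\vec{n}$ we have $|B_{\vec{n}}(X)|>\sum_{j<\delta\prod_i n_i}\binom{\prod_i n_i}{j}$, so Sauer--Shelah applied to $B_{\vec{n}}(X)\subseteq\{0,1\}^{\prod_i[1,n_i]}$ yields a shattered set $S_{\vec{n}}\subseteq\prod_i[1,n_i]$ of size $\geq\delta\prod_i n_i$. \emph{Step 2 (shift and averaging).} For $\vec{m}\leq\vec{n}$, partition $\prod_i[1,n_i]$ into translates of $\prod_i[1,m_i]$; averaging over these translates produces some $\vec{v}$ with $|S_{\vec{n}}\cap(\vec{v}+\prod_i[1,m_i])|\geq\delta\prod_i m_i$, and shift-invariance of $X$ makes $(S_{\vec{n}}\cap(\vec{v}+\prod_i[1,m_i]))-\vec{v}\subseteq\prod_i[1,m_i]$ a shattered set by $B_{\vec{m}}(X)$ of density $\geq\delta$.

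\emph{Step 3 (diagonalisation).} Choose $\vec{m}_k\to\infty$ and $\vec{n}_k$ growing arbitrarily faster. Using Step 2 together with pigeonhole over the finitely many possible restrictions to each $\prod_i[1,m_{j,i}]$ ($j<k$), iteratively pick translation vectors $\vec{v}_k$ and shattered sets $T_k\subseteq\prod_i[1,m_{k,i}]$ of density $\geq\delta$ satisfying $T_{k+1}\cap\prod_i[1,m_{k,i}]=T_k$. Set $T:=\bigcup_k T_k$. Then $T\cap\prod_i[1,m_{k,i}]=T_k$ has density $\geq\delta$, giving $\overline{d}(T)\geq\delta>0$; and every finite $F\subseteq T$ lies in some $T_k$, whose shattering (together with the compactness of $X$) realises any $\{0,1\}$-function on $F$ by a point of $X$, upgrading by another compactness argument to independence of the whole $T$.

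The main obstacle is Step 3: a naive product-topology limit of the translated shattered sets can have zero upper density, because the shattered subsets may drift to infinity inside $\prod_i[1,n_i]$ without ever concentrating near the origin. Simultaneously enforcing shatteredness, density $\geq\delta$ and the nesting $T_{k+1}\cap\prod_i[1,m_{k,i}]=T_k$ requires careful use of the freedom in choosing the translates $\vec{v}_k$ and the cushion $\vec{n}_k\gg\vec{m}_k$, combined with pigeonhole on the (finitely many) restrictions to each prior box; this is the essential $\mathbb{N}^d$-extension of the diagonal technique of Falniowski--Kulczycki--Kwietniak. An alternative route, closer in spirit to the amenable-group approach of Kerr--Li, is to Følner-average the indicators $\chi_{S_{\vec{n}}-\vec{v}}$ over $\vec{v}\in\prod_i[1,n_i]$ to obtain a shift-invariant measure $\mu$ on $\{0,1\}^{\mathbb{N}^d}$ with $\mu(\{x:x(\vec{0})=1\})\geq\delta$, and then apply the mean ergodic theorem for the amenable action of $\mathbb{N}^d$ to extract a $\mu$-typical configuration whose support is the required independence set.
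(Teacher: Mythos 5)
The easy direction of your proposal is correct and is essentially the paper's argument: an independence set of upper density $\alpha$ forces $|B_{\vec n}(X)|\geq 2^{|S\cap\prod_i[1,n_i]|}$ along the boxes realising the $\limsup$, hence $h(X)\geq\alpha\log 2$.

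In the hard direction, your Steps 1 and 2 are sound and correspond to parts (3)--(6) of the paper's Lemma \ref{lemma1} (the counting bound plus Sauer--Shelah give, for every box $\prod_i[1,k_i]$, an independence set of cardinality $\lfloor\epsilon k_1\cdots k_d\rfloor$). But Step 3 is exactly where the theorem lives, and your proposal does not close it: you correctly observe that a product-topology limit of the shattered sets can drift to infinity and have zero upper density, and that simultaneously enforcing shatteredness, density $\geq\delta$, and the nesting $T_{k+1}\cap\prod_i[1,m_{k,i}]=T_k$ is the obstacle --- but you then assert this can be done by ``careful use of the freedom in choosing the translates'' without exhibiting the construction. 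As stated, the scheme fails: the averaging of Step 2 only tells you that \emph{some} translate of the $(k+1)$-st box meets $S_{\vec n}$ in density $\geq\delta$, and re-translating to achieve this destroys whatever restriction to the $k$-th box you fixed at the previous stage; pigeonholing over the finitely many restrictions to the $k$-th box can only stabilise a restriction, not force it to be nonempty, let alone $\delta$-dense. Your alternative Kerr--Li route via F\o lner averaging and the ergodic theorem is a legitimate strategy but is likewise only a sketch.

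The paper resolves this by a different device that avoids nesting altogether. It packages the characteristic functions of independence sets of all boxes into a factorial, prolongable binary language, hence an auxiliary $\mathbb{N}^d$ subshift $I_X$ whose points are precisely the indicators of independence sets of $X$. Step 1 shows $\mathrm{Fr}_1(I_X)>0$, where $\mathrm{Fr}_1$ is the maximal asymptotic frequency of the symbol $1$ defined via a two-parameter Fekete argument in \eqref{eq Fr}. The key technical input is Lemma \ref{lemma1}(1): a subadditivity argument producing blocks of $I_X$ in which \emph{every} initial sub-box $[1,j]\times[1,k_2]$ already carries symbol density $\geq\mathrm{Fr}_1(I_X)$; Lemma \ref{lemma1}(2) then takes a compactness limit of such blocks to obtain a single point of $I_X$ whose support has upper density exactly $\mathrm{Fr}_1(I_X)>0$. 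That ``every prefix is already dense'' property is precisely what defeats the drift-to-infinity problem you identified, and it is the ingredient missing from your Step 3. If you want to complete your write-up along the lines you propose, you should either prove the $\mathbb{N}^d$ analogue of this prefix-density lemma for the auxiliary shift, or fully execute the invariant-measure argument (verify that the weak-$*$ limit is supported on $I_X$ and apply the pointwise ergodic theorem along boxes).
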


\subsection{Tree-shifts and independence sets}
The second aim of this article is to study the independence of tree-shifts. A tree-shift is a type of multidimensional shift and has received much attention in past two decades (\cite{aubrun2012tree, ban2017mixing, ban2017tree, PS-2017complexity,petersen2020entropy}). Before presenting our main result, we first provide the background of the tree-shift. $T$ is called a \emph{tree} if $T$ is a countable graph that is locally finite without loops and with a root $\epsilon$, and we define it as a Cayley graph with $d$ generators $\{g_{1},\ldots ,g_{d}\}$. Let $M\in
[0,1]^{d\times d}$, the associated \emph{Markov-Cayley tree }$T^{M}$ is defined as 
\[
T^{M}=\{\epsilon\}\cup\{g_1,...,g_d\}\cup \{g_{i_{1}}\cdots g_{i_{n}}:n\geq 2\text{, }M(g_{i_{j}},g_{i_{j+1}})=1\text{ }\forall 1\leq j \leq n-1\}\text{.} 
\]

Let $T$ be a tree, we denote by $T_{i}$ the set of vertices in $T$ with length $i$ for $i\in \mathbb{N}\cup \{0\}$, where the length of $g\in T$ is the number of edges from $\epsilon $ to $g$. The union of $T_{i}$ from $i=0$ to $i=n$ is denoted by $\Delta _{n}:=\cup _{i=0}^{n}T_{i}$. The $n$-\emph{block} is a function $u:\Delta_{n}\rightarrow \mathcal{A}$ and a \emph{tree-shift} is a set $\mathcal{T}\subseteq \mathcal{A}^{T}$ of labeled trees (where a labeled tree is a function $t:T\rightarrow \mathcal{A}$.) which avoid all of a certain set of forbidden blocks. A tree-shift $\mathcal{T}$ is called a 
\emph{tree-shift of finite type} (\emph{tree-SFT}) if the cardinality of the forbidden set is finite. Let $X\subseteq \mathcal{A}^{\mathbb{N}}$ be an $\mathbb{N}$ shift, we confine ourselves to a wider class of tree-shifts $\mathcal{T}_{X}$, namely, the \emph{tree-shift associated with} $X$ , which is
defined as 
\begin{equation}
\mathcal{T}_{X}=\left\{x\in \mathcal{A}^{T}:(x_{g_{i_{1}}g_{i_{2}}\cdots
g_{i_{j}}})_{j\in \mathbb{N}\cup\{0\}}\in X\text{ for any }
(g_{i_{1}}g_{i_{2}}\cdots g_{i_{j}})_{j\in \mathbb{N}\cup\{0\}}\subseteq T\right\}\text{,}
\label{2}
\end{equation}
where $g_{i_0}:=\epsilon$. The $\mathbb{N}$ shift $X$ is called the \emph{base shift} of the $\mathcal{T}_{X}$ \cite{ban2024topological}. The concept of $\mathcal{T}_{X}$ was introduced by Petersen and Salama \cite{petersen2020entropy} and it indicates that the language of $X$ is the source of every word in each ray of the tree $T$. The concept of the tree-shift is the same as the $\mathbb{N}^{d}$ vertex shift if $X$ is a one-step SFT and $T=\mathbb{N}^{d}$ \cite{LM-1995}. It should also be mentioned that $\mathcal{T}_{X}$ is a type of $G$-shift (where $G$ represents a group.) \cite{
fiorenzi2009periodic, frisch2017symbolic,lemp2017shift} if $T$ is replaced by a group $G$.

Let $\mathcal{T}\subseteq \mathcal{A}^{T}$ be a tree-shift, we denote by $\mathcal{P}(\Delta_{n},\mathcal{T})$ the \emph{canonical projection} of $\mathcal{T}$ into the subblock on $\Delta_{n}$. That is, $\mathcal{P}(\Delta_{n},\mathcal{T})=\{(x_{g})_{g\in \Delta_{n}}\in \mathcal{A}^{\Delta_{n}}:x\in \mathcal{T}\}$. 

The \emph{entropy} of $\mathcal{T}$ is defined as 
\begin{equation}
h(\mathcal{T})=\limsup\limits_{n\rightarrow \infty }\frac{\log \left\vert \mathcal{P}(\Delta_{n},\mathcal{T})\right\vert }{\left\vert \Delta_{n}\right\vert }\text{.}  \label{3}
\end{equation}
The limsup in (\ref{3}) is indeed a limit for a tree-shift on a conventional $d$-tree with $d\in \mathbb{N}$ \cite{PS-2017complexity, petersen2020entropy}. The same result applies to a large class of tree-shifts on Markov-Cayley trees \cite{ban2022stem}.

Similar to the independent set for an $\mathbb{N}^{d}$ shift, we provide the definition of the independence set for a tree-shift. A subset $S\subseteq T$ is called an \emph{independence set }of a tree-shift $\mathcal{T}$ if $\forall u\in \mathcal{A}^{S}$, there exists an $t=(t_{g})_{g\in T}\in\mathcal{A}^{T}$ such that $t_{g}=u_{g}$ $\forall g\in S$. The structure of $T$ is crucial to the investigation of the independence set for the $\mathcal{T}_{X}$. The concept of `expanding numbers', defined below, is used to classify all trees. For any $T$, the following number $\gamma_{T}$ is called the \emph{expanding number} of a tree $T$ whenever the following limit exists. 
\begin{equation}
\gamma _{T}:=\lim_{n\rightarrow \infty }\frac{\left\vert T_{n+1}\right\vert 
}{\left\vert T_{n}\right\vert }\geq 1\text{.}  \label{1}
\end{equation}
The number (\ref{1}) measures how the cardinality of $T_{n}$ spreads as $n$ tends to the infinity. A tree $T$ is called \emph{expandable} if $\gamma _{T}>1$ (e.g., the conventional $2$ tree), and \emph{unexpandable} if $\gamma _{T}=1$ (e.g., the Markov-Cayley tree $T^{M}$ with $M=\left[ 
\begin{array}{rr}
1 & 1 \\ 
0 & 1
\end{array}
\right] $). Later on, we will demonstrate that the independence set phenomenon behaves differently on shift spaces defined on both types of trees. The \emph{upper density} of a set $S\subseteq T$ is defined by 
\begin{equation}
\overline{d}(S)=\limsup\limits_{n\rightarrow \infty }\frac{\left\vert S\cap\Delta_{n}\right\vert }{\left\vert \Delta_{n}\right\vert }\text{.}\label{7}
\end{equation}
If the limsup in (\ref{7}) is indeed a limit, then we call it the \emph{limit} of $S\subseteq T$ and write $d(S)$.   

\begin{theorem}[Tree-shift on $\protect\gamma =1$ trees]
\label{Thm: 1}Let $X$ be a hereditary shift and $T$ be an unexpandable Markov-Cayley tree. Then,
\begin{enumerate}
\item the entropies of $\mathcal{T}_{X}$ and $X$ are coincident.

\item the tree-shift $\mathcal{T}_{X}$ has positive entropy if and only if $\mathcal{T}_{X}$ has an independence set with positive density.
\end{enumerate}

Furthermore, if $X$ is an $\mathbb{N}$ shift with zero entropy, and $T$ is an unexpandable tree, then $\mathcal{T}_{X}$ contains no independence set with positive entropy.
\end{theorem}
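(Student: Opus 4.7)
The argument has three layers, stacked so that parts (2) and the \emph{furthermore} statement reduce to part (1) together with Theorem \ref{Thm: 2}. For part (1), $h(\mathcal{T}_X)=h(X)$, I would exploit the combinatorial rigidity of an unexpandable Markov--Cayley tree $T=T^M$: since $\gamma_T=1$ forces the spectral radius of $M$ to equal one, the numbers $|T_n|$ grow only polynomially and $\Delta_n$ decomposes into a thin ``spine'' carrying the branching together with many chain-like branches. For the upper bound I enumerate labelings of $\Delta_n$ by labeling the spine first (a word in $B_{\vert\text{spine}\vert}(X)$) and then each branch chain of length $k$, which, given the spine prefix, is constrained by shift-invariance to at most $|B_k(X)|$ possibilities; summing gives $\log|\mathcal{P}(\Delta_n,\mathcal{T}_X)|\le h(X)\sum_{k}|T_k|+o(|\Delta_n|)$, which normalizes to $h(X)$. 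For the matching lower bound I use hereditariness: put zeros on the spine and label each branch chain of length $k$ by an arbitrary $w\in B_k(X)$; every ray of the resulting tree is coordinate-wise below an $X$-word, hence in $X$, and the branches can be chosen independently, producing at least $\prod_k|B_k(X)|^{\#\{\text{branches of length }k\}}$ labelings, again asymptotic to $e^{h(X)|\Delta_n|}$.

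For part (2), the backward direction is the bound $|\mathcal{P}(\Delta_n,\mathcal{T}_X)|\ge|\mathcal{A}|^{|S\cap\Delta_n|}$, which yields $h(\mathcal{T}_X)\ge d(S)\log|\mathcal{A}|>0$. For the forward direction I chain part~(1) with Theorem \ref{Thm: 2}: $h(\mathcal{T}_X)>0$ forces $h(X)>0$, so $X$ has an independence set $A\subseteq\mathbb{N}$ with $d(A)>0$, and I lift it via $S=\{g\in T:|g|\in A\}$. Given a prescribed $\eta:S\to\mathcal{A}$, define $t\in\mathcal{A}^T$ by $t_g=\eta(g)$ on $S$ and $t_g=0$ off $S$; for each infinite ray $\rho$ of $T$ the independence of $A$ furnishes an $x\in X$ agreeing with $\eta$ at depths in $A$, and zeroing out its off-$A$ coordinates keeps it in $X$ by hereditariness and matches $(t_{\rho(k)})_k$, so $t\in\mathcal{T}_X$. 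Finally $d(S)=d(A)>0$ by an Abel-summation argument applied to the polynomially-growing weights $|T_k|$.

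For the \emph{furthermore} claim I do not use hereditariness. If $S\subseteq T$ is an independence set for $\mathcal{T}_X$ with positive density, then along every infinite ray $\rho$ the depth set $\{k:\rho(k)\in S\}$ is an independence set for $X$ (restrict $\eta$ to $S\cap\rho$). A pigeonhole/diagonal argument against the polynomial-sized levels $T_n$ of the unexpandable tree produces at least one ray $\rho^{*}$ whose depth set has positive upper density in $\mathbb{N}$, contradicting $h(X)=0$ via Theorem \ref{Thm: 2} (or its Shnirelman strengthening from \cite{falniowski2015two}). I expect the main technical obstacle to be the matching entropy count of part (1) for \emph{arbitrary} unexpandable Markov--Cayley trees rather than just the staircase model, where the Jordan structure of $M$ at eigenvalue $1$ governs the tree's shape and one must verify that the spine/branch decomposition and the Abel-summation computation go through uniformly; a secondary difficulty is the ray-extraction step in the \emph{furthermore} argument, since positive density in $T$ need not a priori concentrate on any single ray without a careful use of the $\gamma_T=1$ structure.
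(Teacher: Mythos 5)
Parts (1) and (2) of your proposal are essentially sound and follow the paper's route: the ``spine/branch'' decomposition is the paper's decomposition of an unexpandable Markov--Cayley tree into a vanishing-proportion pre-sink part plus long chains inside the sink cycles, the lower bound in (1) is the same ``zeros on the spine, arbitrary $X$-blocks on the chains'' construction via hereditariness, and the upper bound is the count the paper imports from \cite{ban2024topological}. In (2) your lift $S=\{g\in T:|g|\in A\}$ is a genuinely different (and cleaner) placement than the paper's, which puts a copy of $A$ on the sink suffix of each ray; your version has the advantage that the hereditariness step is a pure coordinatewise domination $y\le x$ with no prepended zeros, at the cost of having to verify $d(S)=d(A)$ by the weighted (Abel-summation) density computation against $|T_k|$ --- which does go through because $|T_k|$ is eventually quasi-polynomial with $|T_{k+1}|/|T_k|\to 1$.

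The genuine gap is in your argument for the \emph{furthermore} claim. The ray-extraction step is not merely delicate; it is false: positive density of $S$ in $T$ does \textbf{not} imply that some infinite ray meets $S$ in a set of positive upper density, even for unexpandable trees. Take $T=T^M$ with $M=\left[\begin{matrix}1&1\\0&1\end{matrix}\right]$, whose vertices are the words $g_1^ag_2^b$, and let $S=\{g_1^ag_2^b: 1\le a\le b\le 2a\}$. Then $|S\cap\Delta_n|/|\Delta_n|\to 1/6$, yet every infinite ray ($g_1^\infty$ or $g_1^ag_2^\infty$) meets $S$ in a \emph{finite} set, so every ray has zero upper density in $S$; no ray $\rho^*$ as in your argument exists. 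The contradiction with $h(X)=0$ must instead be extracted at finite scales: either observe that some root-to-level-$n$ path already carries $\gtrsim \delta|\Delta_n|/|T_n|\gtrsim cn$ points of $S$ (an independence set for $X$ inside a window of length $n+1$, forcing $|B_{n+1}(X)|\ge 2^{cn}$), or, as the paper does, decompose $\Delta_n$ into disjoint path segments, bound $|S\cap P|\le J_{|P|}$ where $J_\ell$ is the maximal size of an independence set for $X$ in a window of length $\ell$, and use Fekete's lemma together with Theorem \ref{Thm: 2} to get $J_\ell=o(\ell)$ before summing over the decomposition. Your proposal as written stops at the infinite-ray statement and therefore does not close.
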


It is revealed by Theorem \ref{Thm: 1} that the phenomenon of independence sets for shifts on unexpandable trees is comparable to that of $\mathbb{N}$ shifts (Theorem \ref{Thm: 2}) or $\mathbb{N}^{d}$ shifts (Theorem \ref{Thm: 3}) for $d\geq 2$. In addition, Ban et al. \cite{ban2024topological} demonstrate that 
\begin{equation}
h(\mathcal{T}_{X})=h(X)  \label{5}
\end{equation}
for $X$ is a coded system with finite generators or it satisfies the almost specification property. The result of (\ref{5}) is extended to a large class of shifts (i.e., the hereditary shift) on unexpandable trees in Theorem \ref{Thm: 1} (1). Many classical shifts, such as subshifts of finite type, sofic shifts, and beta shifts, are included in the class of hereditary shifts.
However, our proof of Theorem \ref{Thm: 1} is not applicable to all shifts. Therefore, the question mentioned below remains unanswered.

\begin{problem}
Does Theorem \ref{Thm: 1} hold true for all shifts?
\end{problem}

There is a dramatic difference in the theme of the independence set for tree-shifts on $\gamma >1$ trees. Theorem \ref{Thm: 4} below shows that the classical definition of the independence set is not appropriate for shifts on $\gamma >1$ trees.

\begin{theorem}[No independence set for a tree-shift on $\protect\gamma >1$ trees]\label{Thm: 4}
Let $T$ be an expandable tree. Then, there exists a tree-shift $\mathcal{T}\subseteq [0,1]^{T}$ which has positive entropy but it contains no independence set with positive density.
\end{theorem}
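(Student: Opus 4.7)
The plan is to construct an explicit tree-shift $\mathcal{T} \subseteq [0,1]^T$ on the expandable tree $T$ satisfying both properties simultaneously. The key structural feature of expandability $\gamma_T > 1$ is that $|T_n|/|\Delta_n| \to (\gamma_T-1)/\gamma_T > 0$, so any $S \subseteq T$ with $\overline{d}(S) > 0$ must, for infinitely many $n$, contain a positive fraction of the boundary level $T_n$. The construction should therefore rigidify the projection of $\mathcal{T}$ onto each $T_n$ sufficiently that no positive-density subset of $T_n$ can be shattered.

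A natural starting point is a parity-type tree-SFT
\[
\mathcal{T}_{0} = \{x \in [0,1]^{T} : x_g = \bigoplus_{i=1}^{d} x_{g c_i} \text{ for every internal } g\}
\]
on the conventional $d$-tree. A top-down count of free labels on $T_n$ yields $|\mathcal{P}(\Delta_n, \mathcal{T}_0)| = 2^{|T_n|}$, whence $h(\mathcal{T}_0) = \tfrac{\gamma_T - 1}{\gamma_T} \log 2 > 0$. A single parity constraint by itself is insufficient, however, because $S = T_{n_0}$ is already an independence set of density $(\gamma_T - 1)/\gamma_T > 0$. To remedy this, one intersects $\mathcal{T}_0$ with additional cross-level $\mathbb{F}_2$-linear relations tying $T_n$ to $T_{n+1}$, calibrated so that the total number of such relations contributes only a vanishing fraction of $|\Delta_n|$ (preserving positive entropy) while the induced linear maps between consecutive boundaries have rank that collapses on any positive-density subset.

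The independence-set property translates into a linear-algebraic statement: $S$ is an independence set if and only if the family of $\mathbb{F}_2$-functionals $\{f_g\}_{g \in S \cap \Delta_n}$ on $\{0,1\}^{T_n}$ — where each $f_g$ is the combination of boundary labels forced by the value at $g$ under the combined constraints — is linearly independent for every $n$. The main obstacle is to engineer the cross-level relations so that for every $S$ with $\overline{d}(S) > 0$ this family is dependent at infinitely many scales $n$; exploiting $|T_n| \sim \gamma_T^{n}$, one staggers the constraints so that the cumulative rank at scale $n$ eventually falls strictly below $|S \cap \Delta_n|$ whenever $S$ has positive density. A labeling $u : S \to [0,1]$ chosen to violate such a dependence then fails to extend to any $x \in \mathcal{T}$, completing the argument.
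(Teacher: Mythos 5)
Your proposal is a plan rather than a proof: the entire burden of the theorem is concentrated in the ``cross-level $\mathbb{F}_2$-linear relations'' that you never actually construct. You state yourself that ``the main obstacle is to engineer the cross-level relations so that for every $S$ with $\overline{d}(S)>0$ this family is dependent at infinitely many scales,'' and then assert that one ``staggers the constraints'' to achieve this --- but no explicit family of relations is exhibited, and no argument is given that relations with the two competing properties (vanishing contribution to the constraint count, so that entropy stays positive, versus rank collapse on \emph{every} positive-density set) exist at all. Since the theorem is an existence statement, an unverified existence claim inside the proof is a genuine gap. There is also a factual error in your motivation: a single level $S=T_{n_0}$ does \emph{not} have density $(\gamma_T-1)/\gamma_T$; by definition $(\ref{7})$ its density is $\lim_n |T_{n_0}|/|\Delta_n|=0$. (The parity shift $\mathcal{T}_0$ is indeed not a counterexample, but for a different reason: taking from each level all vertices except one child of each parent yields an independence set of density $(\gamma_T-1)/\gamma_T$.)

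For comparison, the paper avoids linear algebra entirely and takes a much more elementary route: it uses the tree-shift $\mathcal{T}_A$ with $A=\left[\begin{smallmatrix}1&1\\0&1\end{smallmatrix}\right]$, i.e.\ the monotone constraint that a vertex labeled $1$ forces all its descendants to be $1$. Positive entropy follows because labeling $\Delta_{n-1}$ by $0$ leaves $T_n$ completely free, and $|T_n|/|\Delta_n|$ is bounded below since $\gamma_T>1$. For the converse, the constraint forces any independence set to be an antichain (one can never realize $t_g=1$, $t_{gg'}=0$), and an antichain cannot have positive density on an expandable tree because each of its elements excludes an entire subtree of descendants. If you want to salvage your approach, you would need to either produce the staggered relations explicitly and prove both the entropy lower bound and the rank-collapse property, or --- more efficiently --- replace the linear-algebraic mechanism by a monotonicity constraint of the paper's type, which converts the independence question into a purely combinatorial statement about antichains.
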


To illustrate the positive entropy for shifts on $\gamma >1$ trees, we introduce a new concept of the boundary independence property below.

\begin{definition}[Boundary independence property]
Let $\mathcal{T}\subseteq \mathcal{A}^{T}$ be a tree-shift. We call a tree-shift $\mathcal{T}\subseteq \mathcal{A}^{T}$ satisfies the \emph{boundary independence property} if there exist $l\geq 1$ and a sequence $\{S_{n}\}_{n=l}^{\infty }\subseteq T$ such that (1). $S_{n}\subseteq \Delta_{n}\setminus \Delta_{n-l}$ for all $n\geq l$, (2). each $S_{n}$ is an independence set for $\mathcal{T}$, and (3). $\limsup\limits_{n\rightarrow\infty }\frac{\left\vert S_{n}\cap \Delta_{n}\right\vert }{\left\vert\Delta_{n}\right\vert }>0$.
\end{definition}

Theorem \ref{Thm: 5} demonstrates that the boundary independence property characterizes the positive entropy of a tree-shift on a $\gamma >1$ tree.

\begin{theorem}[Tree-shifts on $\protect\gamma >1$ trees]\label{Thm: 5}
Let $\mathcal{T}$ be a tree-shift on $T$ over $[0,1]$. Then, the following assertions are equivalent.
\begin{enumerate}
\item the tree-shift $\mathcal{T}$ has positive entropy,

\item the tree-shift $\mathcal{T}$ has the boundary independence property.
\end{enumerate}

Furthermore, if $\mathcal{T}$ is a tree-shift on $T$ over $[0,r-1]$ with $r\geq 2$, then $h(\mathcal{T})>\log (r-1)$ if and only if $\mathcal{T}$ has the boundary independence property.
\end{theorem}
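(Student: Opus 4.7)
My plan is to adapt the Sauer-Shelah shattering strategy of Weiss \cite{weiss2000single} and Falniowski et al.\ \cite{falniowski2015two} to the tree setting, arranging the shattered sets to lie entirely in the boundary layer $B_n := \Delta_n \setminus \Delta_{n-l}$, whose asymptotic density in $\Delta_n$ tends to $1 - \gamma^{-l}$ thanks to \eqref{1}.

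The direction (2) $\Rightarrow$ (1) is a direct counting argument. If $\{S_n\}_{n\geq l}$ witnesses the boundary independence property, then for each $u \in \mathcal{A}^{S_n}$ some extension in $\mathcal{T}$ restricts to a block in $\mathcal{P}(\Delta_n,\mathcal{T})$ agreeing with $u$ on $S_n$, and distinct $u$'s give distinct restrictions, so $|\mathcal{P}(\Delta_n,\mathcal{T})| \geq r^{|S_n|}$. Taking $\log$, dividing by $|\Delta_n|$, and passing to the $\limsup$ in \eqref{3} yields $h(\mathcal{T}) \geq (\limsup_n |S_n|/|\Delta_n|)\log r > 0$; in the binary case ($r=2$) this already reads $h(\mathcal{T}) > 0 = \log(r-1)$.

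For (1) $\Rightarrow$ (2), fix $\epsilon > 0$ with $h(\mathcal{T}) > \log(r-1) + 2\epsilon$ and a subsequence $\{n_k\}$ realising the $\limsup$ in \eqref{3}. Split $\Delta_{n_k} = \Delta_{n_k-l} \sqcup B_{n_k}$ and pigeonhole over the $r^{|\Delta_{n_k-l}|}$ interior patterns: some $w \in \mathcal{A}^{\Delta_{n_k-l}}$ carries a family of extensions $\mathcal{F}_k \subseteq \mathcal{A}^{B_{n_k}}$ with $|\mathcal{F}_k| \geq |\mathcal{P}(\Delta_{n_k},\mathcal{T})|/r^{|\Delta_{n_k-l}|}$. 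Because $|\Delta_{n-l}|/|\Delta_n| \to \gamma^{-l}$, choosing $l$ large (depending on $\epsilon$, $h(\mathcal{T})$, $\gamma$) converts the entropy lower bound into
\[
|\mathcal{F}_k| \geq (r-1)^{|B_{n_k}|}\, e^{\delta |B_{n_k}|}
\]
for some $\delta > 0$ and all large $k$. Now invoke the generalised Sauer-Shelah-Pajor bound of Karpovsky-Milman and Alon: if $\mathcal{F} \subseteq [r]^N$ admits no fully shattered subset of size $d+1$, then $|\mathcal{F}| \leq \sum_{i=0}^d \binom{N}{i}(r-1)^i$. Setting $d+1 = \alpha N$ for a small $\alpha > 0$ and using the standard binomial-entropy estimate $\binom{N}{\alpha N}(r-1)^{\alpha N} \leq e^{N(H(\alpha) + \alpha\log(r-1))}$ (with $H$ the binary entropy), the sum is majorised by $(r-1)^N e^{\delta N/2}$ as soon as $H(\alpha) + \alpha\log(r-1) < \log(r-1) + \delta/2$, contradicting the lower bound on $|\mathcal{F}_k|$. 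Hence a fully shattered $S_{n_k} \subseteq B_{n_k}$ of size at least $\alpha |B_{n_k}|$ must exist; via $w$ and $\mathcal{F}_k$ it is an independence set of $\mathcal{T}$ of density at least $\alpha(1-\gamma^{-l}) > 0$ in $\Delta_{n_k}$, and filling in $S_n = \emptyset$ for $n \notin \{n_k\}$ completes the witness sequence.

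The main technical obstacle is the joint calibration of $l$ (boundary width), $\epsilon$ (entropy gap), and $\alpha$ (shattered density) so that the boundary proportion $1-\gamma^{-l}$, the entropy surplus $h - \log(r-1)$, and the Sauer-Shelah-Pajor estimate are simultaneously compatible; the threshold $\log(r-1)$ arises precisely from the $(r-1)^N$ leading term in the Karpovsky-Milman-Alon inequality, which is why the proof naturally yields this exact cutoff. For the reverse of the furthermore assertion with $r > 2$ (namely, BIP $\Rightarrow h(\mathcal{T}) > \log(r-1)$), the naive counting bound $h \geq \alpha\log r$ need not exceed $\log(r-1)$ when $\alpha$ is small, so an additional boosting step is required --- one exploits the self-similarity of $T$ (from $\gamma > 1$) to promote the given sequence $\{S_n\}$ to a new sequence whose boundary density exceeds $\log(r-1)/\log r$ before applying the counting lower bound.
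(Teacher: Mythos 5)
Your proof of the main equivalence (the binary case, and the implication $h(\mathcal{T})>\log(r-1)\Rightarrow$ BIP for general $r$) is correct, and it takes a mildly different route from the paper's. For $(2)\Rightarrow(1)$ both arguments are the same counting estimate $|\mathcal{P}(\Delta_n,\mathcal{T})|\geq 2^{|S_n|}$. For $(1)\Rightarrow(2)$ the paper applies the Sauer--Shelah/Karpovsky--Milman lemmas of \cite{falniowski2015two} to all of $\Delta_n$ at once, producing an independence set $I_n\subseteq\Delta_n$ with $|I_n|\geq\epsilon|\Delta_n|$, and then simply intersects $I_n$ with the annulus $\Delta_n\setminus\Delta_{n-L}$: since $|\Delta_{n-L}|/|\Delta_n|\to\gamma_T^{-L}$ can be made smaller than $\epsilon$ (this is the only place $\gamma_T>1$ is used), at least $(\epsilon-\gamma_T^{-L}-o(1))|\Delta_n|$ points of $I_n$ survive, and a subset of an independence set is again an independence set. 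You instead pigeonhole over the interior patterns on $\Delta_{n-l}$ and run the shattering argument inside the boundary coordinates only. Both are valid; the paper's version is shorter because it never needs the pigeonhole step, while yours produces the shattered set directly where it is required and makes the role of the entropy threshold $\log(r-1)$ transparent.

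The genuine gap is the one you flag yourself: the direction BIP $\Rightarrow h(\mathcal{T})>\log(r-1)$ for $r>2$. The ``boosting step'' you propose is not an argument, and I do not see how it could become one, because the achievable density of independence sets is constrained by $\mathcal{T}$ itself, not by the self-similarity of $T$. Concretely, over $\mathcal{A}=\{0,1,2\}$ let $\mathcal{T}$ consist of the labeled trees in which any two nonzero symbols lying on a common ray are at distance at least $k$. Every $T_n$ is an independence set (no two of its vertices share a ray), so the BIP holds with $l=1$ and boundary density $1-\gamma_T^{-1}$; but every independence set must be $k$-separated along rays, so for $\gamma_T=2$ no independence set has density in $\Delta_n$ exceeding roughly $\tfrac{1}{2}(1-2^{-k})^{-1}$, which is below the threshold $\log 2/\log 3$ your counting bound would require. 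So no rearrangement of the $S_n$'s can close the gap; one would have to control the number of legal patterns off $S_n$ as well, which neither your counting bound nor any shattering lemma provides. To be fair, the paper does not prove this direction either --- its proof ends with ``a similar argument holds for more numerous alphabets'' --- so the missing step is a defect of the stated proof of the theorem and not only of your proposal; but as written, what you (and the paper) actually establish is the full biconditional for $r=2$ together with only the forward implication of the ``furthermore'' clause.
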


 The \emph{topological surface entropy}, say $h^{(S)}(\mathcal{T})$, is introduced in \cite{ban2022analogue} and defined as 
\[
h^{(S)}(\mathcal{T})=\limsup\limits_{n\rightarrow \infty }\frac{\log\left\vert B_{n}^{(S)}(X)\right\vert }{\left\vert T_{n}\right\vert }\text{,} 
\]
where $B_{n}^{(S)}(\mathcal{T})=\{t|_{T_{n}}:t\in \mathcal{T}\}$ is the set of blocks on $T_{n}$. The quantity of $h^{(S)}(\mathcal{T})$ for a tree-shift $\mathcal{T}$ is used to study the topological sequence entropy of the tree-shift. It has been demonstrated that the results of the topological sequence entropy for tree-shifts on $\gamma >1$ trees \cite{ban2022analogue} are different than those of $\mathbb{N}$ shifts (cf. \cite{huang2009combinatorial,kamae2002sequence, tan2009set}). Theorem \ref{Thm: 6} demonstrates that the boundary independence property characterizes the positive surface entropy of a tree-shift on a $\gamma >1$ tree. 

\begin{theorem}[Tree-shifts on $\protect\gamma >1$ trees]\label{Thm: 6}
Let $\mathcal{T}$ be a tree-shift on $T$ over $[0,1]$. Then, the following assertions are equivalent.
\begin{enumerate}
\item the tree-shift $\mathcal{T}$ has positive surface entropy,

\item the tree-shift $\mathcal{T}$ has the boundary independence property.
\end{enumerate}
\end{theorem}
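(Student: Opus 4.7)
The plan is to establish $(1)\Longleftrightarrow(2)$ directly, paralleling the proof of Theorem \ref{Thm: 5} but now counting boundary blocks on $T_n$ rather than projected blocks on $\Delta_n$. Note that Corollary \ref{Cor 1} is downstream of Theorems \ref{Thm: 5} and \ref{Thm: 6}, so it cannot be invoked here.

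For $(2)\Rightarrow(1)$, let $l$ and $\{S_n\}$ witness the boundary independence property, with $|S_n|/|\Delta_n|\geq \delta$ along an infinite subsequence. Since $S_n\subseteq\Delta_n\setminus\Delta_{n-l}$, the pigeonhole principle yields $i_n\in\{0,\ldots,l-1\}$ with $|S_n\cap T_{n-i_n}|\geq|S_n|/l$, and any subset of an independence set is again independent (extend a labeling to $S_n$ arbitrarily and apply independence). Realizing each $\eta\in\{0,1\}^{S_n\cap T_{n-i_n}}$ by some $t\in\mathcal{T}$ and restricting to $T_{n-i_n}$ forces
\[
\bigl|B_{n-i_n}^{(S)}(\mathcal{T})\bigr| \;\geq\; 2^{|S_n|/l}.
\]
Since $|T_{n-i_n}|\leq|T_n|\leq|\Delta_n|$, the ratio $\log\bigl|B_{n-i_n}^{(S)}(\mathcal{T})\bigr|/|T_{n-i_n}|$ stays at least $(\delta\log 2)/l$ along the subsequence, and $m_n=n-i_n\to\infty$, so $h^{(S)}(\mathcal{T})>0$.

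For $(1)\Rightarrow(2)$, the key tool is the Sauer--Shelah lemma applied to the family $\mathcal{F}_n:=B_n^{(S)}(\mathcal{T})\subseteq\{0,1\}^{T_n}$. If $h^{(S)}(\mathcal{T})=\alpha>0$, then along a subsequence $n_k\to\infty$ one has $|\mathcal{F}_{n_k}|\geq 2^{(\alpha-\varepsilon)|T_{n_k}|}$. Combined with the binomial entropy bound $\sum_{i\leq\beta N}\binom{N}{i}\leq 2^{H(\beta)N}$, Sauer--Shelah produces, for any $\beta$ with $H(\beta)<\alpha-\varepsilon$, a shattered subset $S_{n_k}\subseteq T_{n_k}$ with $|S_{n_k}|\geq\beta|T_{n_k}|$. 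Shattering says precisely that every $\eta\in\{0,1\}^{S_{n_k}}$ is realized as $b|_{S_{n_k}}$ for some $b\in\mathcal{F}_{n_k}$, equivalently for some $t\in\mathcal{T}$, so $S_{n_k}$ is an independence set for $\mathcal{T}$. Setting $S_n=\emptyset$ for $n\notin\{n_k\}$ and taking $l=1$, one has $S_{n_k}\subseteq T_{n_k}=\Delta_{n_k}\setminus\Delta_{n_k-1}$ and
\[
\frac{|S_{n_k}|}{|\Delta_{n_k}|} \;\geq\; \beta\cdot\frac{|T_{n_k}|}{|\Delta_{n_k}|},
\]
which stays above a positive constant because $\gamma>1$ keeps $|T_n|/|\Delta_n|$ bounded away from zero.

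The main obstacle is the $(1)\Rightarrow(2)$ step: transferring an exponential lower bound on $|\mathcal{F}_n|$ into a linear-sized shattered subset of $T_n$ requires the quantitative form of Sauer--Shelah, and one must verify that the chosen $\beta$ can be taken uniformly along the subsequence approaching the $\limsup$. The hypothesis $\gamma>1$ plays an essential role in converting density relative to $T_n$ into density relative to $\Delta_n$; on unexpandable trees the ratio $|T_n|/|\Delta_n|$ tends to zero and this step would collapse, which is the structural reason the present theorem is specific to the expandable regime.
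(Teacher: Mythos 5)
Your proposal is correct, and half of it diverges from the paper in a worthwhile way. For $(1)\Rightarrow(2)$ you and the paper do essentially the same thing: apply the binomial entropy bound together with the Sauer--Shelah-type lemma (Lemma \ref{lemma1} (3) and (5), i.e.\ Lemmas 1--3 of \cite{falniowski2015two}) to the family $B_{n}^{(S)}(\mathcal{T})\subseteq\{0,1\}^{T_n}$ along a subsequence realizing the $\limsup$, extract an independent subset of $T_{n_k}$ of size proportional to $|T_{n_k}|$, and use $\gamma_T>1$ to keep $|T_n|/|\Delta_n|$ bounded below so that density relative to $T_n$ transfers to density relative to $\Delta_n$; your remark that this transfer is exactly where expandability enters is accurate. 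For $(2)\Rightarrow(1)$, however, the paper does not argue directly: it routes through Theorem \ref{Thm: 5} (boundary independence implies $h(\mathcal{T})>0$) and then invokes the inequality $h(\mathcal{T})\leq h^{(S)}(\mathcal{T})$ via an argument adapted from \cite[Lemma 2.3]{ban2024topological}. Your pigeonhole argument --- locating a single level $T_{n-i_n}$ with $i_n<l$ carrying at least $|S_n|/l$ points of $S_n$, and reading off $|B^{(S)}_{n-i_n}(\mathcal{T})|\geq 2^{|S_n|/l}$ --- is self-contained, avoids both Theorem \ref{Thm: 5} and the external lemma, and yields the explicit bound $h^{(S)}(\mathcal{T})\geq(\delta\log 2)/l$; the paper's route is shorter on the page but leans on machinery proved elsewhere. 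The only cosmetic point is the base of the exponential ($2^{(\alpha-\varepsilon)|T_{n_k}|}$ versus $e^{(\alpha-\varepsilon)|T_{n_k}|}$ depending on whether $\log$ is natural), which the paper handles with the factor $\log_2 e$ and which does not affect your argument.
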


Combining Theorems \ref{Thm: 5} and \ref{Thm: 6}, we have the following result.
\begin{corollary}\label{Cor 1}
    Let $T$ be an expandable tree and let $\mathcal{T}$ be a tree-shift on $T$. Then, $h(\mathcal{T})>0$ if and only if $h^{(S)}(\mathcal{T})>0$. 
\end{corollary}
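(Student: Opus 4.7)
The plan is essentially bookkeeping: Theorems \ref{Thm: 5} and \ref{Thm: 6} characterize two a priori different quantities (positive entropy and positive surface entropy) by exactly the \emph{same} combinatorial condition, namely the boundary independence property. The corollary therefore follows by transitivity of biconditionals through that common characterization, and requires no new combinatorial input.

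Concretely, I would carry out the proof as follows. Fix an expandable tree $T$ and a tree-shift $\mathcal{T}$ on $T$, taking the alphabet to be $[0,1]$ in line with the convention of Theorems \ref{Thm: 5} and \ref{Thm: 6}. First, invoke Theorem \ref{Thm: 5} to obtain the equivalence $h(\mathcal{T})>0 \iff \mathcal{T}$ has the boundary independence property. Next, invoke Theorem \ref{Thm: 6} to obtain the equivalence $\mathcal{T}$ has the boundary independence property $\iff h^{(S)}(\mathcal{T})>0$. Chaining these two biconditionals gives $h(\mathcal{T})>0 \iff h^{(S)}(\mathcal{T})>0$, which is the claim.

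There is no genuine obstacle in this step — all the difficulty has been absorbed into Theorems \ref{Thm: 5} and \ref{Thm: 6}, whose proofs independently build and exploit the boundary independence property. The only bookkeeping points worth flagging in the write-up are: (i) that the hypothesis ``$T$ expandable'' matches the standing assumption of both invoked theorems (each is stated for tree-shifts on $\gamma>1$ trees), so the chain is legitimate; and (ii) if one wishes to extend from alphabet $[0,1]$ to a larger finite alphabet $[0,r-1]$ with $r\geq 2$, one should apply the ``furthermore'' clause of Theorem \ref{Thm: 5} (replacing the threshold $0$ by $\log(r-1)$ on the entropy side) together with Theorem \ref{Thm: 6}, observing that the boundary independence property itself is alphabet-agnostic beyond $|\mathcal{A}|\geq 2$.
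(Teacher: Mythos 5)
Your proof is correct and is exactly the paper's argument: the paper derives Corollary \ref{Cor 1} precisely by chaining the biconditionals of Theorems \ref{Thm: 5} and \ref{Thm: 6} through the common boundary independence property. Your remark (ii) on larger alphabets is a reasonable extra observation but not needed for the statement as given.
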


In the remainder of this article, we provide the complete proof for Theorem \ref{Thm: 3} in Section \ref{sec2}, Theorem \ref{Thm: 1} in Section \ref{sec3}, and Theorems \ref{Thm: 4}, \ref{Thm: 5} and \ref{Thm: 6} in Section \ref{sec4}.

\section{Proof of Theorem \ref{Thm: 3}}\label{sec2}
Before proving Theorem \ref{Thm: 3}, some necessary notations, terminologies, and useful lemmas are provided. For the sake of simplicity, we only consider the case when $d=2$, and the case when $d>2$ can be treated in the same manner. For $a\in\mathcal{A}$ and $x\in \mathcal{A}^{\mathbb{N}^2}$, 
\begin{equation*}
   \chi_a(x) =\left\{ (i,j)\in\mathbb{N}^2: x_{i,j}=a \right\}.
\end{equation*}
Let $X\subseteq \mathcal{A}^{\mathbb{N}^2}$ be an $\mathbb{N}^2$ shift and let $B_{k_1,k_2}(X)$ be the set of all $k_1\times k_2$ blocks appearing in $X$. For $w\in B_{k_1,k_2}(X)$, 
\begin{equation*}
    \|w\|_a:=\left|\left\{(i,j)\in [1,k_1]\times [1,k_2]: w_{i,j}=a\right\}\right|.
\end{equation*}
Denote
\begin{equation}\label{eqM}
    M_{k_1,k_2}^a(X):=\max \left\{\|w\|_a:w\in B_{k_1,k_2}(X)\right\},
\end{equation}
and let $\overline{w}^{(k_1,k_2)}\in B_{k_1,k_2}(X)$ be a block which attains the maximal number of occurrences of the symbol $a$. That is,
\begin{equation}\label{eq wbar}
    \left\|\overline{w}^{(k_1,k_2)}\right\|_a= M_{k_1,k_2}^a(X).
\end{equation}

Note that for $n_1,n_2,m_1,m_2\geq 1$,
\begin{equation*}
    M_{n_1+n_2,m_1}^a(X)\leq M_{n_1,m_1}^a(X)+M_{n_2,m_1}^a(X),
\end{equation*}
and
\begin{equation*}
    M_{n_1,m_1+m_2}^a(X)\leq M_{n_1,m_1}^a(X)+M_{n_1,m_2}^a(X).
\end{equation*}
By Fekete’s Lemma, we have 
\begin{equation}\label{eq Fr}
    \lim_{k_1,k_2\to\infty}\frac{M_{k_1,k_2}^a(X)}{k_1\cdot k_2}=\inf_{k_1,k_2\geq 1}\frac{M_{k_1,k_2}^a(X)}{k_1\cdot k_2}:=\mbox{Fr}_a(X).
\end{equation}

The following lemma provide a variation of \cite[Theorem 3, Lemmas 1, 2, 3 and 4]{falniowski2015two}.

\begin{lemma}\label{lemma1} The following assertions hold true.
  \begin{enumerate}
      \item  For any $k_1,k_2>1$, there exists a block $u\in B_{k_1,k_2}(X)$ such that for all $1\leq j\leq k_1$
    \begin{equation}\label{lma1-0}
        \left\|u|_{[1,j]\times[1,k_2]}\right\|_a\geq  j\cdot k_2\cdot\text{Fr}_a(X),
    \end{equation}
where $u|_{[s_1,s_2]\times [t_1,t_2]}:=\left\{u_{s,t}:  s_1\leq s\leq s_2,~ t_1\leq t\leq t_2\right\}$.   
\item For any $a\in \mathcal{A}$, there exists a block $w_a\in X$ such that 
    \begin{equation*}
        \overline{d}(\chi_a(w_a))=\mbox{Fr}_a(X).
    \end{equation*}
    \item Let $0<\epsilon \leq \frac{1}{2}$ and $n_1,n_2\geq 1$. Then, 
    \begin{equation*}
        \sum_{j=0}^{\left\lfloor \epsilon n_1\cdot n_2\right\rfloor}\binom{n_1\cdot n_2}{j}\leq 2^{n_1\cdot n_2\cdot H(\epsilon)},
    \end{equation*}
    where $H(\epsilon):=-\epsilon \log \epsilon -(1-\epsilon)\log (1-\epsilon)$.
    \item Let $\mathcal{F}$ be a family of binary blocks of size $n\geq 0$. Then $| \mathbb{I}(\mathcal{F})|\geq |\mathcal{F}|$, where $\mathbb{I}(\mathcal{F})$ is the collection of all independence sets of $\mathcal{F}$.
    \item Let $\mathcal{F}\subseteq [0,1]^n$ be a family of binary blocks of size $n\geq 1$. If for some $1 \leq  k \leq  n$ we have
    \begin{equation*}
         \left|\mathcal{F}\right|>\sum_{j=0}^{k-1}\binom{n}{j},
    \end{equation*}
    then $\mathcal{F}$ is independent over some set with cardinality $k$.
    \item Let $X$ be a binary $\mathbb{N}^2$ shift with positive topological entropy. Then there is an $\epsilon> 0$ such that for every $k_1,k_ 2\geq 1$ there is a set $J\subseteq [1,k_1]\times [1,k_2]$ with $\lfloor \epsilon k_1k_2\rfloor$ elements which is an independence set for $X$.
  \end{enumerate}
\end{lemma}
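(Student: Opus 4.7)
The plan is to apply the Sauer--Shelah style bound in part (5) to the family $\mathcal{F}=B_{k_1,k_2}(X)$, after using positive entropy to produce a uniform lower bound on its cardinality. The argument splits naturally into two steps.

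Step one is to establish the uniform lower bound
\[
|B_{k_1,k_2}(X)|\;\geq\; 2^{h(X)\,k_1 k_2}\qquad\text{for every }k_1,k_2\geq 1.
\]
For fixed $m\geq 1$, slicing any $m\times(n+n')$ block into two stacked blocks gives an injection $B_{m,n+n'}(X)\hookrightarrow B_{m,n}(X)\times B_{m,n'}(X)$, so $a_{m,n}:=\log|B_{m,n}(X)|$ is subadditive in $n$. Fekete's lemma yields $\psi(m):=\lim_{n\to\infty}a_{m,n}/n$ together with the lower bound $a_{m,n}/n\geq\psi(m)$. The analogous injection in the other coordinate gives $a_{m+m',n}\leq a_{m,n}+a_{m',n}$, and passing to the limit in $n$ shows $\psi(m+m')\leq\psi(m)+\psi(m')$. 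A second application of Fekete then yields $\psi(m)/m\geq \lim_{m\to\infty}\psi(m)/m = h(X)$, and chaining these inequalities produces the claimed bound.

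Step two combines this bound with parts (3) and (5) of Lemma \ref{lemma1}. I choose $\epsilon>0$ small enough that $H(\epsilon)<h(X)$, which is possible since $H(\epsilon)\to 0$ as $\epsilon\to 0^+$. Then by part (3),
\[
\sum_{j=0}^{\lfloor\epsilon k_1 k_2\rfloor}\binom{k_1 k_2}{j}\;\leq\; 2^{H(\epsilon)k_1 k_2}\;<\;2^{h(X)k_1 k_2}\;\leq\;|B_{k_1,k_2}(X)|.
\]
Regarding $B_{k_1,k_2}(X)$ as a family of binary blocks of size $n=k_1 k_2$ via any bijection $[1,k_1]\times[1,k_2]\to[1,n]$, part (5) produces an independence set $J_0$ of cardinality $\lfloor\epsilon k_1 k_2\rfloor+1$. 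Any subset $J\subseteq J_0$ of size $\lfloor\epsilon k_1 k_2\rfloor$ inherits the independence property, and since every element of $B_{k_1,k_2}(X)$ is the restriction of a point of $X$, this $J$ is the desired independence set for $X$.

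The main technical hurdle is step one: asserting $|B_{k_1,k_2}(X)|\geq 2^{h(X)k_1 k_2}$ for every pair, not merely asymptotically. In the one-dimensional setting of \cite{falniowski2015two} this follows from a single application of Fekete's lemma, whereas in two dimensions one must iterate subadditivity along both coordinates and verify that the intermediate minorant $\psi(m)$ is itself subadditive. Once this uniform bound is secured, the remaining argument is an almost mechanical analogue of the one-dimensional proof.
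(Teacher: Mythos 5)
Your proposal addresses only assertion (6) of the six-part lemma. For that assertion the argument is correct and is essentially the intended one (the two-dimensional adaptation of Lemma~4 of \cite{falniowski2015two}): the uniform bound $|B_{k_1,k_2}(X)|\geq 2^{h(X)k_1k_2}$ obtained by iterating Fekete's lemma in each coordinate (and checking that the intermediate minorant $\psi(m)$ is itself subadditive) is exactly the right replacement for the one-dimensional inequality $|B_n(X)|\geq 2^{h(X)n}$, and combining it with (3) and (5) is how the paper (implicitly, by reference to \cite{falniowski2015two}) intends (6) to be proved. Omitting proofs of (3), (4), (5) is also consistent with the paper, which cites them from the one-dimensional source without proof.

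The genuine gap is that assertions (1) and (2) are not addressed at all, and these are precisely the parts the paper proves in detail because they are the new two-dimensional content. Assertion (1) is proved in the paper by a contrapositive slicing argument: assuming every block of some size $k_1\times k_2$ has a horizontal prefix whose $a$-frequency drops strictly below $\mathrm{Fr}_a(X)$, one concatenates such deficient slices inside a long block $\overline{w}^{(k_1^2N+1,k_2)}$ that maximizes the count of $a$, and derives $\|\overline{w}\|_a<p_1p_2\,\mathrm{Fr}_a(X)$, contradicting $\mathrm{Fr}_a(X)=\inf_{k_1,k_2}M^a_{k_1,k_2}(X)/(k_1k_2)$. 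Assertion (2) then follows by a compactness/diagonal argument from the blocks produced in (1). These two items cannot be skipped: in the proof of Theorem \ref{Thm: 3} they are applied to the auxiliary shift $I_X$ of characteristic functions of independence sets in order to pass from your conclusion in (6) (an independence set of proportional size inside \emph{each} box $[1,k_1]\times[1,k_2]$) to a \emph{single} infinite independence set $J\subseteq\mathbb{N}^2$ with $\overline{d}(J)=\mathrm{Fr}_1(I_X)>0$. Without (1) and (2) that passage, and hence the forward direction of Theorem \ref{Thm: 3}, is unsupported. You would need to supply proofs of (1) and (2) (or an equivalent mechanism for extracting a positive-upper-density independence set from the family of finite ones) to complete the lemma.
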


\begin{proof}
\item[\bf (1)] Arguing contrapositively, there exist $k_1,k_2>1$ such that for any block $u\in B_{k_1,k_2}(X)$, there exists $1\leq j\leq k_1$ such that
    \begin{equation}\label{pflma1-1}
         \frac{\left\|u|_{[1,j]\times [1,k_2]}\right\|_a}{j\cdot k_2}< \text{Fr}_a(X)~(\leq 1).
    \end{equation}
    Note that $ \frac{\left\|u|_{[1,j]\times[1,k_2]}\right\|_a}{j\cdot k_2}=\frac{b}{c}$ with $0\leq b<c\leq k_1\cdot k_2$ and $k_1,k_2$ are fixed, we have that
    \begin{equation}\label{pflma1-3}
        \alpha_0:=\min \left\{\mbox{Fr}_a(X)-\frac{b}{c}>0: 0\leq b<c\leq k_1\cdot k_2\right\}
    \end{equation}
    is well-defined and is positive. We now take a positive integer $N$ such that $\left\lfloor N\alpha_0 \right\rfloor=1$. By (\ref{pflma1-1}) and $(p_1,p_2)=(k_1^2N+1,k_2)$, we can divide $\overline{w}^{(p_1,p_2)}$ (defined in (\ref{eq wbar})) into at least
$Nk+1$ pieces, with each being of size $k_1\times k_2$ at most, and for all but at most one of them (\ref{pflma1-1}) fails. We now choose $1\leq j_1,j_2,...,j_t\leq k_1$ iteratively from (\ref{pflma1-1}) such that $p_1-\sum_{i=1}^t j_i <k_1$, and for each $1\leq \ell \leq t$,
\begin{equation}\label{pflma1-2}
     \frac{\left\|\overline{u}^{(p_1,p_2)}|_{\left[\sum_{i=1}^{\ell-1}j_i+1,\sum_{i=1}^{\ell}j_i\right]\times [1,k_2]}\right\|_a}{j_\ell\cdot k_2}<\mbox{Fr}_a(X).
\end{equation}
Then, by (\ref{pflma1-3}), 
\begin{equation}\label{pflma1-4}
    \alpha_\ell:= \mbox{Fr}_a(X)-\frac{\left\|\overline{u}^{(p_1,p_2)}|_{\left[\sum_{i=1}^{\ell-1}j_i+1,\sum_{i=1}^{\ell}j_i\right]\times[1,k_2]}\right\|_a}{j_\ell\cdot k_2}\geq \alpha_0 ~(\forall 1\leq \ell \leq t).
\end{equation}
Thus, by (\ref{pflma1-4}), we have
\begin{equation}\label{pflma1-5}
  \begin{aligned}
    \left\|\overline{u}^{(p_1,p_2)}|_{\left[\sum_{i=1}^{\ell-1}j_i+1,\sum_{i=1}^{\ell}j_i\right]\times[1,k_2]}\right\|_a
    =&j_\ell\cdot k_2\cdot \left(\mbox{Fr}_a(X)-\alpha_{\ell}\right)\\
    \leq &j_\ell\cdot k_2\cdot \left(\mbox{Fr}_a(X)-\alpha_0\right) ~(\forall 1\leq \ell \leq t).
\end{aligned}  
\end{equation}
Hence, by (\ref{pflma1-5}),
\begin{align*}
\left\|\overline{u}^{(p_1,p_2)}\right\|_a=&\sum_{\ell=1}^t\left\|\overline{u}^{(p_1,p_2)}|_{\left[\sum_{i=1}^{\ell-1}j_i+1,\sum_{i=1}^{\ell}j_i\right]\times [1,k_2]}\right\|_a\\
&+\left\|\overline{u}^{(p_1,p_2)}|_{\left[\sum_{i=1}^{\ell}j_i+1,p_1\right]\times[1,k_2]}\right\|_a\\
    \leq &p_1k_2\left(\mbox{Fr}_a(X)-\alpha_0\right)+k_1 k_2\\
    =&p_1p_2\mbox{Fr}_a(X)-(k_1^2N+1)\alpha_0 k_2+k_1k_2\\
    \leq &p_1p_2\mbox{Fr}_a(X)-k_1^2N\alpha_0 k_2+k_1k_2\\
    \leq &p_1p_2\mbox{Fr}_a(X)-k_1^2k_2+k_1k_2<p_1p_2\mbox{Fr}_a(X),
\end{align*}
which contradicts with definitions (\ref{eq wbar}) and (\ref{eq Fr}).
\item[\bf (2)] By (1), we have a sequence of blocks $\{u^{(k,k)}\}_{k=1}^\infty$, with $u^{(k,k)}\in B_{k,k}(X)$, satisfying (\ref{lma1-0}). It can be obtained easily by using a standard compactness argument, then we have a subsequence $\{k_i\}_{i=1}^\infty$ of $\{k\}_{k=1}^\infty$ such that $u^{(k_i,k_i)}$ converges to a point $x\in X$ with
\begin{equation}\label{lma1-6}
    \frac{\left\|x^{(k_i,k_i)}|_{[1,k_i]\times[1,k_i]}\right\|_a}{k_i\cdot k_i}\geq \mbox{Fr}_a(X)~(\forall i\geq 1).
\end{equation}
Then, by (\ref{eqM}) and (\ref{lma1-6}),we have
\begin{equation*}
    \mbox{Fr}_a(X)\leq\frac{\left\|x^{(k_i,k_i)}|_{[1,k_i]\times[1,k_i]}\right\|_a}{k_i\cdot k_i}\leq \frac{M_{k_i,k_i}^a(X)}{k_i\cdot k_i}~(\forall i\geq 1).
\end{equation*}
Taking $i\to\infty$, since $\mbox{Fr}_a(X)=\lim_{k_1,k_2\to\infty}\frac{M_{k_1,k_2}^a(X)}{k_1\cdot k_2}$, we have
\begin{equation*}
    \lim_{i\to\infty}\frac{\left\|x^{(k_i,k_i)}|_{[1,k_i]\times[1,k_i]}\right\|_a}{k_i\cdot k_i}=\mbox{Fr}_a(X).
\end{equation*}
This implies that
\begin{equation*}
    \overline{d}(\chi_a(x))=\limsup_{k_1,k_2\to\infty} \frac{\left\|x^{(k_1,k_2)}|_{[1,k_1]\times[1,k_2]}\right\|_a}{k_1\cdot k_2}\geq \mbox{Fr}_a(X).
\end{equation*}
Note that $\overline{d}(\chi_a(x))\leq \mbox{Fr}_a(X)$. We obtain that $\overline{d}(\chi_a(x))=\mbox{Fr}_a(X)$. 
\item[\bf (3)] The proofs of (3), (4), (5) and (6) are similar to the proofs of Lemma 1, 2, 3 and 4 in \cite{falniowski2015two}, we omit them here.  
\end{proof}

\begin{proof}[Proof of Theorem \ref{Thm: 3}]
The proof just follows the proof of Theorem \ref{Thm: 2} with Lemma \ref{lemma1}. Precisely, we assume that $X$ has positive entropy. For $k_1,k_2\geq 1$, let $\mathcal{L}_{k_1,k_2}(X)$ be the collection of characteristic functions of sets of independent for $B_{k_1,k_2}(X)$. We can treat each element of $\mathcal{L}_{k_1,k_2}(X)$ as a binary block; then $\mathcal{L}(X):=\cup_{k_1,k_2=1}^\infty \mathcal{L}_{k_1,k_2}(X)$ is a factorial and prolongable binary language. We denote the $\mathbb{N}^2$ shift it defines by $I_X$. By compactness of the full shift and the fact that a point $\mathcal{A}^{\mathbb{N}^2}$ is in $X_{\mathcal{L}}$ if and only if $x_{\{i_1,...,i_2\}\times\{j_1,...,j_2\}} \in \mathcal{L}$ for all $i_1, i_2,j_1,j_2 \in \mathbb{N}$ with $i_1 < i_2$ and $j_1<j_2$, elements of $I_X$ may be identified with characteristic functions of independence sets of $X$. It follows from (6) of Lemma \ref{lemma1} that $\mbox{Fr}_1(I_X) > 0$. Using (2) of Lemma \ref{lemma1} we may now fix an element of $I_X$ which is a characteristic function of $J\subseteq \mathbb{N}^2$, an independence set for $X$, such that $\overline{d}(J) = \mbox{Fr}_1(I_X) > 0$.

Conversely, if $J$ is an independence set for $X$ such that $\overline{d}(J) >\delta> 0$, then there exists a subset $\{(n_i,m_i)\}_{i=1}^\infty$ of $\mathbb{N}^2$ such that for $i\geq 1$ there are at least $n_i\cdot m_i\cdot\delta$ elements of $J$ in $[1,n_i]\times [1,m_i]$. Therefore,
\begin{equation*}
    \left|B_{n_i,m_i}(X)\right|\geq 2^{n_im_i\delta} ~(\forall i\geq 1).
\end{equation*}
This implies that $h(X)\geq \delta >0$.
\end{proof}

We remark that for the case $d\geq 2$, we only need to fix $d-1$ directions and prove a similar version of Lemma \ref{lemma1} (1) and (2).
\section{Proof of Theorem \ref{Thm: 1}}\label{sec3}
\begin{proof}[Proof of Theorem \ref{Thm: 1}]
  \item[\bf (1)] Let $X\subseteq [0,m-1]^\mathbb{N}$ be a hereditary shift. Note that $0^n u\in B(X)$ for all $u\in B(X)$, where $B(X)$ is the set of all finite blocks of $X$. By \cite[Theorem 1.1]{ban2024topological}, $h(\mathcal{T}_X)$ is always less than or equal to $h(X)$ when $T$ is an unexpandable tree. It remains to show that $h(\mathcal{T}_X)\geq h(X)$ when $T$ is an unexpandable Markov-Cayley tree. Note that an unexpandable Markov-Cayley tree is a Markov-Cayley tree with adjacency matrix $M$ having maximum eigenvalue 1, and the matrix $M$ can be considered to be of the form
    \begin{equation*}
        M=\left[\begin{matrix}
          C_1&\cdots&*\\
          \vdots&\ddots&\vdots\\
          0&\cdots&C_k
        \end{matrix}\right],
    \end{equation*}
    where $C_1,...,C_k$ are irreducible cycles, and * means that the remaining elements of the upper triangle of $M$ are nonnegative. (Fact: an irreducible 0-1 matrix with maximum eigenvalue 1 is similar to a cycle). We may assume that $C_k$ is the only sink (otherwise, $C_k$ is a union of sinks). Since each ray in $M^n$ must find a way to reach a sink after $|M|$ ($|\cdot|$ also denotes the size of a matrix) steps of walks, and since $|M| < \infty$, we can conclude that the number of rays in $M^n$ ending in a sink is at least $|M^{n-|M|}|$. This gives that
    \begin{equation*}
        \frac{\sum_{i=1}^{|M|}\sum_{j=|M|-|C_k|+1}^{|M|}(M^n)_{ij}}{|M^n|}\geq \frac{|M^{n-|M|}|}{|M^n|}.
    \end{equation*}    
    Since $T$ is unexpandable, we have 
    \begin{equation*}
        \lim_{n\to\infty}\frac{|M^{n-|M|}|}{|M^n|}=\lim_{n\to\infty}\frac{|T_{n-|M|+1}|}{|T_{n+1}|}=1.
    \end{equation*}    
    Thus, 
    \begin{equation}\label{eq1}
        \lim_{n\to\infty}\frac{\sum_{i=1}^{|M|}\sum_{j=|M|-|C_k|+1}^{|M|}(M^n)_{ij}}{|M^n|}=1.
    \end{equation}
    
    We decorate 0 on each ray of the Markov-Cayley tree $T$ before it goes into sinks, then the vertices in sinks can be decorated by any blocks in $B(X)$. This implies that the number of blocks on $\Delta_n$ is greater than or equal to the product of the number of blocks on each sink for some length $i$ from 1 to $n+1$. More precisely, let 
\begin{equation}\label{def al}
    a_\ell =\sum_{i=1}^{|M|}\sum_{j=|M|-|C_k|+1}^{|M|}(M^{n-\ell+1})_{ij}-\sum_{i=1}^{|M|}\sum_{j=|M|-|C_k|+1}^{|M|}(M^{n-\ell})_{ij}
\end{equation}
be the number of paths entering sinks at the $n-\ell$ level, we have     
    \begin{equation*}
        |\mathcal{P}(\Delta_n,\mathcal{T}_X)|\geq \prod_{\ell=1}^{n+1} |\mathcal{P}([1,\ell],X)|^{a_\ell}.
    \end{equation*}
Then,
\begin{equation}\label{eq2}
    \begin{aligned}
    \frac{\log |\mathcal{P}(\Delta_n,\mathcal{T}_X)|}{|\Delta_n|}
    \geq &\frac{\sum_{\ell=1}^{n+1} a_\ell \log|\mathcal{P}([1,\ell],X)| }{\sum_{\ell=1}^{n+1} \ell\cdot a_\ell}\frac{\sum_{\ell=1}^{n+1} \ell\cdot a_\ell}{|\Delta_n|}\\
    \geq &\inf_{i\geq 1} \frac{\log|\mathcal{P}([1,i],X)|}{i} \frac{\sum_{\ell=1}^{n+1} \ell\cdot a_\ell}{|\Delta_n|}\\
    = &h(X)\frac{\sum_{\ell=1}^{n+1} \ell\cdot a_\ell}{|\Delta_n|}.
\end{aligned}
\end{equation}

It remains to show that $\frac{\sum_{\ell=1}^{n+1} \ell\cdot a_\ell}{|\Delta_n|}$ tends to 1 as $n\to\infty$. Note that
\begin{align*}
    &\frac{\sum_{\ell=1}^{n+1} \ell\cdot a_\ell}{|\Delta_n|}\\
    =&\frac{\sum_{\ell=1}^{n+1} \ell\left[\sum_{i=1}^{|M|}\sum_{j=|M|-|C_k|+1}^{|M|}(M^{n-\ell+1})_{ij}-\sum_{i=1}^{|M|}\sum_{j=|M|-|C_k|+1}^{|M|}(M^{n-\ell})_{ij}\right]}{1+|M|+|M^2|+\cdots +|M^n|}\\
    =&\frac{\sum_{\ell=1}^{n+1} \sum_{i=1}^{|M|}\sum_{j=|M|-|C_k|+1}^{|M|}(M^{n-\ell+1})_{ij}}{1+|M|+|M^2|+\cdots +|M^n|}.
\end{align*}
By (\ref{eq1}) and taking $n\to\infty$, we have 
\begin{equation}\label{eq 0-4}
    \lim_{n\to\infty}\frac{\sum_{\ell=1}^{n+1} \ell\cdot a_\ell}{|\Delta_n|}=1.
\end{equation}
By (\ref{eq2}), $h(\mathcal{T}_X)\geq h(X)$. Thus, $h(\mathcal{T}_X)=h(X)$.

\item[\bf (2)]
It is quickly obtained that if $\mathcal{T}_X$ has independence set with positive density, then $h(\mathcal{T}_X)>0$. Conversely, if $h(\mathcal{T}_X)>0$, then by Theorem \ref{Thm: 1}, we have $h(X)>0$. By Theorem \ref{Thm: 2}, we obtain the independence set $S\subseteq \mathbb{N}$ of $X$ of positive density. Then, we put $S$ on each sink. Then, we claim that the union $U$ of S in sinks is the independence set for $\mathcal{T}_X$ and has positive density. Similar to the above theorem, we decorate 0 before a path into a sink, then the same reasoning above leads to the independence of $U$. It remains to check the positive density of $U$. By Theorem \ref{Thm: 2}, we have that $d(S)$ exists. Then, for any $\epsilon >0$, there exists a positive integer $N$ such that if $n\geq N$ then 
\begin{equation}\label{eq 0-1}
    n(d(S)-\epsilon)<\left|S\cap [1,n]\right| <n\left(d(S)+\epsilon \right).
\end{equation}
By (\ref{def al}), we have that for $n\geq 1$, 
\begin{equation*}
    U\cap \Delta_n=\prod_{\ell=1}^{n+1} \left(S\cap[1,\ell]\right)^{a_{\ell}}.
\end{equation*}
This implies that
\begin{align*}
\sum_{\ell=N}^{n+1} a_\ell \left|S\cap[1,\ell]\right|
     \leq&\left|U\cap \Delta_n\right|=\sum_{\ell=1}^{n+1} a_\ell \left|S\cap[1,\ell]\right|
     \leq \sum_{\ell=1}^{N-1}a_{\ell}\ell+\sum_{\ell=N}^{n+1} a_\ell \left|S\cap[1,\ell]\right|.
\end{align*}
Then, by (\ref{eq 0-1}), we have
\begin{equation*}
    \sum_{\ell=N}^{n+1} a_\ell \ell (d(S)-\epsilon)
     <\left|U\cap \Delta_n\right|< \sum_{\ell=1}^{N-1}a_{\ell}\ell+\sum_{\ell=N}^{n+1} a_\ell \ell (d(S)+\epsilon).
\end{equation*}
Hence, 
\begin{align}\label{eq 0-2}
    \frac{\sum_{\ell=N}^{n+1} a_\ell \ell }{|\Delta_n|}(d(S)-\epsilon)
     <\frac{\left|U\cap \Delta_n\right|}{|\Delta_n|}< \frac{\sum_{\ell=1}^{N-1}a_{\ell}\ell}{|\Delta_n|}+\frac{\sum_{\ell=N}^{n+1} a_\ell \ell }{|\Delta_n|}(d(S)+\epsilon).
\end{align}
Note that $T$ is unexpandable, we have that for $n\geq N$ 
\begin{equation}\label{eq 0-3}
    0\leq\lim_{n\to\infty}\frac{\sum_{\ell=1}^{N-1}a_{\ell}\ell}{|\Delta_n|}\leq \lim_{n\to\infty} \frac{|\Delta_n\setminus \Delta_{n-N}|}{|\Delta_n|}=0.
\end{equation}
Combining (\ref{eq 0-2}), (\ref{eq 0-3}) and (\ref{eq 0-4}), we have
\begin{equation*}
    d(S)-\epsilon<\liminf_{n\to\infty} \frac{\left|U\cap \Delta_n\right|}{|\Delta_n|} \leq \limsup_{n\to\infty}\frac{\left|U\cap \Delta_n\right|}{|\Delta_n|} <d(S)+\epsilon.
\end{equation*}
Thus, $d(U)$ exists and is equal to $d(S)$ which is positive. 
\item[\bf (3)] 
 Note that if $S$ is an independence set for $\mathcal{T}_X$, then the restriction of $S$ to each ray is an independence set for $X$. Again, the $h(X)=0$ implies the restriction of $S$ to each ray must be zero density. Now, we claim that $d(S)=0$. Note that for $n\geq 1$, $\Delta_n$ can be decomposed into $|T_{n-\ell+1}|-|T_{n-\ell}|$ many one-dimensional lattices with cardinality $\ell$ for all $1\leq \ell \leq n+1$ where $T_{-1}:=0$. That is,
    \begin{equation}\label{eq3}
        \Delta_n=\bigsqcup_{\ell=1}^{n+1} [1,\ell]^{|T_{n-\ell+1}|-|T_{n-\ell}|},
    \end{equation}
    where $\sqcup$ denotes the disjoint union.
    
    Since the restriction of $S$ on each one-dimensional lattice is independence set for $X$, this gives that the cardinality of $S$ intersecting one-dimensional lattice is less than or equal to the cardinality of maximum independence set for $X$ on this lattice. That is, for $1\leq \ell \leq n+1$,
    \begin{equation}\label{eq4}
        |S\cap [1,\ell]|\leq \max\{|S'|: S'\mbox{ is an independence set for }X \mbox{ on }[1,\ell]\}:=J_\ell.
    \end{equation}
   
    Then, by (\ref{eq3}) and (\ref{eq4}), we have
    \begin{equation}\label{eq 5}
        |S\cap\Delta_n|\leq \sum_{\ell=1}^{n+1} (|T_{n-\ell+1}|-|T_{n-\ell}|) J_\ell.
    \end{equation}
 Note that if $S'$ is an independence set for $X$ on $[1,\ell_1+\ell_2]$, then $S'\cap [1,\ell_1]$ is an independence set for $X$ on $[1,\ell_1]$ and $S'\cap [\ell_1+1,\ell_1+\ell_2]$ is an independence set for $X$ on $[1,\ell_2]$. Then, we have $J_{\ell_1}+J_{\ell_2}\geq J_{\ell_1+\ell_2}$ for all $\ell_1,\ell_2\geq 0$. By Fekete’s Lemma, $\lim_{\ell\to\infty}\frac{J_{\ell}}{\ell}$ exists and is equal to $\inf_{\ell\geq 1}\frac{J_{\ell}}{\ell}$. Then, by $h(X)=0$ and Theorem \ref{Thm: 2}, $\lim_{\ell\to\infty}\frac{J_{\ell}}{\ell}=0$. Thus, fixed an $\epsilon>0$, there exists a $M>0$ such that $\ell\geq M$, $\frac{J_\ell}{\ell}\leq \frac{\epsilon}{2}$. On the other hand, since $T$ is an unexpandable tree, $\lim_{n\to\infty}\frac{|\Delta_n\setminus \Delta_{n-M}|}{|\Delta_n|}=0$, there exists $N>0$ such that $n\geq N$, $\frac{|\Delta_n\setminus \Delta_{n-M}|}{|\Delta_n|}\leq \frac{\epsilon}{2}$. Thus, combining (\ref{eq 5}), for $n> \max\{N,M\}$, 
 \begin{align*}
     0\leq \frac{|S\cap \Delta_n|}{|\Delta_n|}\leq &\frac{\sum_{\ell=1}^{n+1} (|T_{n-\ell+1}|-|T_{n-\ell}|) J_\ell}{|\Delta_n|}\\
     = &\frac{\sum_{\ell=M+1}^{n+1} (|T_{n-\ell+1}|-|T_{n-\ell}|) J_\ell}{\sum_{\ell=1}^{n+1} (|T_{n-\ell+1}|-|T_{n-\ell}|)\ell}+\frac{\sum_{\ell=1}^{M} (|T_{n-\ell+1}|-|T_{n-\ell}|) J_\ell}{|\Delta_n|} \\ 
     \leq&\frac{\sum_{\ell=M+1}^{n+1} (|T_{n-\ell+1}|-|T_{n-\ell}|) J_\ell}{\sum_{\ell=M+1}^{n+1} (|T_{n-\ell+1}|-|T_{n-\ell}|)\ell}+\frac{\sum_{\ell=1}^{M} (|T_{n-\ell+1}|-|T_{n-\ell}|)\ell}{|\Delta_n|} \\ 
     \leq&\frac{\epsilon}{2}+\frac{|\Delta_n\setminus \Delta_{n-M}|}{|\Delta_n|} \\ 
     \leq &\frac{\epsilon}{2}+\frac{\epsilon}{2}=\epsilon.
 \end{align*}
    Hence, $d(S)=0$.
\end{proof}
\section{Proofs of Theorems \ref{Thm: 4}, \ref{Thm: 5} and \ref{Thm: 6}}\label{sec4}

\begin{proof}[Proof of Theorem \ref{Thm: 4}]
        Let $A=\left[\begin{matrix}
        1&1\\
        0&1
    \end{matrix}\right]$. Consider $\mathcal{T}_A$ on $T$. Since for any $n\geq 1$, if the vertices in the $\Delta_{n-1}$ are decorating $0$, then the vertices in $T_n$ will be arbitrarily decorated from $[0,1]$. Thus, the number of blocks on $\Delta_n$ is greater than or equal to $2^{|Tn|}$. Then,
    \begin{align*}
        h(\mathcal{T}_A):=\limsup_{n\to\infty}\frac{\log|\mathcal{P}(\Delta_n,\mathcal{T}_A)|}{|\Delta_n|}
        \geq \limsup_{n\to\infty}\frac{\log 2^{|T_n|}}{\sum_{i=0}^{n}|T_i|}.
    \end{align*}
    Since $\lim_{n\to\infty}\frac{|T_{n+1}|}{|T_{n}|}=\gamma_T>1$, we have
    \begin{equation*}
        h(\mathcal{T}_A)\geq \frac{\log 2}{\sum_{i=0}^\infty \frac{1}{\left(\gamma_T\right)^i}}=\frac{\gamma_T}{\gamma_T -1}\log 2 >0.
    \end{equation*}

    Now, we claim that $\mathcal{T}_A$ has no independence set with positive density. Arguing contrapositively, there exists an independence set $S\subseteq T$ of $\mathcal{T}_A$ with positive density, then, $\lim_{n\to\infty}\frac{|S\cap \Delta_n|}{|\Delta_n|}=\delta>0$. Observe that any two distinct vertices in $S$ can not be prefixed by each other. This is because for any $g,gg'\in T$, by the rule of $A$, there is no $t\in \mathcal{T}_A$ which satisfies $t_{g}=1$ and $t_{gg'}=0$. Consequently, the best choice for $S\cap \Delta_n$ is that all vertices of $S\cap \Delta_n$ are in $T_n$. This implies that
    \begin{equation*}
        |S\cap T_n|=|S\cap \Delta_n|\geq \frac{\delta}{2}|\Delta_n|\geq \frac{\delta}{2} |T_n|\mbox{ for all }n\gg1. 
    \end{equation*}
    Then, by $\frac{M\delta}{2}>1$ for $M=\left\lceil\frac{2}{\delta}\right\rceil<\infty$ (since $\delta>0$) and any two distinct vertices in $S$ can not be prefixed by each other and $|\{i: gg_i\in T\}|\leq d$ for all $g\in T$, we obtain that $S\cap T_n=\emptyset$ for all $n\gg 1$. Then, by $\gamma_T>1$ and $|S\cap T_n|=0~(\forall n\geq m)$, we have
    \begin{equation*}
        d(S)=\lim_{n\to\infty}\frac{\left|S\cap \Delta_n\right|}{\left|\Delta_n\right|}\leq \lim_{n\to\infty}\frac{|\Delta_m|}{|\Delta_n|}=0.
    \end{equation*}
    This contradicts the assumption that $S$ has positive density.
\end{proof}
\begin{proof}[Proof of Theorem \ref{Thm: 5}]
    By \cite[Lemmas 1, 2 and 3]{falniowski2015two} and $h(\mathcal{T})>0$, there exists a constant $\epsilon>0$ such that for each $n\geq 1$, $\mathcal{T}$ is independent on a subset of $\Delta_n$ of cardinality $\epsilon|\Delta_n|$. Since $T$ is an expandable tree ($\gamma_T>1$), then there exists $L>0$ such that $\frac{(\gamma_T)^{L+1}}{\gamma_T-1}<\epsilon$. This gives that there are at least $(\epsilon-\frac{(\gamma_T)^{L+1}}{\gamma_T-1})|\Delta_n|$ many vertices in $\Delta_n\setminus \Delta_{n-L}$ that are independent for $\mathcal{T}$. Note that $\epsilon-\frac{(\gamma_T)^{L+1}}{\gamma_T-1}>0$. Conversely, if $\{S_n\}_{n=\ell}^\infty$ is a sequence of independence sets for $\mathcal{T}$ on $\Delta_n\setminus \Delta_{n-\ell}$ with $\limsup_{n\to\infty}\frac{|S_n\cap \Delta_n|}{|\Delta_n|}=\delta>0$, then there exists a subsequence $\{n_i\}_{i=1}^\infty$ of $\mathbb{N}$ such that 
    \begin{equation*}
        |\mathcal{P}(\Delta_{n_i},\mathcal{T})|\geq 2^{\left|S_{n_i}\right|}~(\forall i\geq 1).
    \end{equation*}
    Thus,
    \begin{align*}
          h(\mathcal{T}):=\limsup_{n\to\infty}\frac{\log|\mathcal{P}(\Delta_n,\mathcal{T})|}{|\Delta_n|}
        \geq \limsup_{i\to\infty}\frac{\log 2^{\left|S_{n_i}\right|}}{|\Delta_{n_i}|}\geq \delta\log2 >0.
    \end{align*}

 A similar argument holds for more numerous alphabets, we omit providing it here.
\end{proof}

\begin{proof}[Proof of Theorem \ref{Thm: 6}]
 If $h(\mathcal{T})>0$, then by a similar argument of \cite[Lemma 2.3]{ban2024topological}, we have $0<h(\mathcal{T})\leq h^{(S)}(\mathcal{T})$. Conversely, if $h^{(S)}(\mathcal{T})>0$, we claim that $\mathcal{T}$ has the boundary independence property, and then, by the equivalence between (1) and (2) in Theorem \ref{Thm: 5}, we have $0<h(\mathcal{T})$. Note that there exist a subsequence $\{n_i\}_{i=1}^\infty$ of $\mathbb{N}$ and $\delta>0$ such that 
   \begin{equation*}
       \left|B_{n_i}^{(S)}(\mathcal{T})\right|\geq 2^{\left|T_{n_i}\right|\delta\log_2 e} ~(\forall i\geq 1).
   \end{equation*}
   Then, by \cite[Lemmas 1 and 2]{falniowski2015two} with an $\epsilon >0$ such that
   \begin{equation*}
       \delta\log_2 e=H(\epsilon):=-\epsilon\log \epsilon -(1-\epsilon)\log (1-\epsilon),
   \end{equation*}
   we have that there exists a subset $I_{n_i}$ of $T_{n_i}$ independent for $\mathcal{T}$ with $\left|I_{n_i}\right|=\left\lfloor \epsilon \left|T_{n_i}\right| \right\rfloor.$ This implies that $\mathcal{T}$ has the boundary independence property.
\end{proof}

\section{Acknowledgements} We sincerely thank Prof. Jian Li for valuable feedback and insightful suggestions on the initial draft of this article. 

Ban is partially supported by the National Science and Technology Council, ROC (Contract No NSTC 111-2115-M-004-005-MY3) and National Center for Theoretical Sciences. Lai is partially supported by the National Science and Technology Council, ROC (Contract NSTC 111-2811-M-004-002-MY2).

{\bf Author Contributions:}\\
Every author has contributed to the project to be included as an author.

{\bf Data availability statement:}\\
No new data were created or analysed in this study.

{\bf Declarations:}\\
Conflict of interest: The authors declare that they have no conflict of interests.

\bibliographystyle{amsplain}
\bibliography{ban}

\end{document}